\theoremstyle{plain} 
\newtheorem{theorem}{Theorem}
\newtheorem{corollary}{Corollary}
\theoremstyle{definition}
\theoremstyle{remark} 
\newtheorem{remark}{Remark}
\newtheorem{example}{Example}
\newcommand{\prob}{\mathsf{P}} 
\newcommand{\E}{\mathsf{E}}
\newcommand{\pl}{\mathsf{pl}}
\newcommand{\mpl}{\mathsf{mpl}}
\newcommand{\bin}{{\sf Bin}}
\newcommand{\unif}{{\sf Unif}}
\newcommand{\nm}{{\sf N}}
\newcommand{\RR}{\mathbb{R}}
\newcommand{\YY}{\mathbb{Y}}
\newcommand{\G}{\mathscr{G}}
\newcommand{\Gbar}{\overline{\mathscr{G}}}
\newcommand{\gbar}{\bar{g}}
\newcommand{\nbrack}{N_{[\,]}}
\newcommand{\veta}{\boldsymbol{\veta}}
\newcommand{\Ybar}{\bar Y}
\newcommand{\ybar}{\bar y}
\renewcommand{\phi}{\varphi} 
\newcommand{\eps}{\varepsilon}
\newcommand{\iid}{\overset{\text{\tiny iid}}{\,\sim\,}}
\title{Plausibility functions and exact frequentist inference}
\author{
Ryan Martin \\
Department of Mathematics, Statistics, and Computer Science \\
University of Illinois at Chicago \\
\url{rgmartin@uic.edu} }
\date{\today}
\begin{document}

\maketitle 

\begin{abstract}
In the frequentist program, inferential methods with exact control on error rates are a primary focus.  The standard approach, however, is to rely on asymptotic approximations, which may not be suitable.  This paper presents a general framework for the construction of exact frequentist procedures based on plausibility functions.  It is shown that the plausibility function-based tests and confidence regions have the desired frequentist properties in finite samples---no large-sample justification needed.  An extension of the proposed method is also given for problems involving nuisance parameters.  Examples demonstrate that the plausibility function-based method is both exact and efficient in a wide variety of problems.

\smallskip

\emph{Keywords and phrases:} Bootstrap; confidence region; hypothesis test; likelihood; Monte Carlo; p-value; profile likelihood.
\end{abstract}

\section{Introduction}
\label{S:intro}

In the Neyman--Pearson program, construction of tests or confidence regions having control over frequentist error rates is an important problem.  But, despite its importance, there seems to be no general strategy for constructing exact inferential methods.  When an exact pivotal quantity is not available, the usual strategy is to select some summary statistic and derive a procedure based on the statistic's asymptotic sampling distribution.  First-order methods, such as confidence regions based on asymptotic normality of the maximum likelihood estimator, are known to be inaccurate in certain problems.  Procedures with higher-order accuracy are also available \citep[e.g.,][]{reid2003, brazzale.davison.reid.2007}, but the more challenging calculations required to implement these approximate methods have led to their relatively slow acceptance in applied work.   

In many cases, numerical methods are needed or even preferred.  Arguably the most popular numerical method in this context is the bootstrap \citep[e.g.,][]{efrontibshirani1993, davison.hinkley.1997}.  The bootstrap is beautifully simple and, perhaps because of its simplicity, has made a tremendous impact on statistics; see the special issue of \emph{Statistical Science} (Volume~18, Issue~2, 2003).  However, the theoretical basis for the bootstrap is also asymptotic, so no claims about exactness of the corresponding method(s) can be made.  The bootstrap also cannot be used blindly, for there are cases where the bootstrap fails and the remedies for bootstrap failure are somewhat counterintuitive \citep[e.g.,][]{bickel1997}.  In light of these subtleties, an alternative and generally applicable numerical method with exact frequentist properties might be desirable.  

In this paper, I propose an approach to the construction of exact frequentist procedures.  In particular, in Section~\ref{SS:construction}, I define a set-function that assigns numerical scores to assertions about the parameter of interest.  This function measures the plausibility that the assertion is true, given the observed data.  Details on how the plausibility function can be used for inference are given in Section~\ref{SS:inference}, but the main idea is that the assertion is doubtful, given the observed data, whenever its plausibility is sufficiently small.  In Section~\ref{SS:theory}, sampling distribution properties of the plausibility function are derived, and, from these results, it follows that hypothesis tests or confidence regions based on the plausibility function have guaranteed control on frequentist error rates in finite samples.  A large-sample result is presented in Theorem~\ref{thm:limit} to justify the claimed efficiency of the method.  Evaluation of the plausibility function, and implementation of the proposed methodology, will generally require Monte Carlo methods; see Section~\ref{SS:computation}.   

Plausibility function-based inference is intuitive, easy to implement, and gives exact frequentist results in a wide range of problems.  In fact, the simplicity and generality of the proposed method makes it easily accessible, even to undergraduate statistics students.  Tastes of the proposed method have appeared previously in the literature but the results are scattered and there seems to be no unified presentation.  For example, the use of p-values for hypothesis testing and confidence intervals is certainly not new, nor is the idea of using Monte Carlo methods to approximate critical regions and confidence bounds \citep{harrison2012, garthwaite.buckland.1992, besag.clifford.1989, bolviken.skovlund.1996}.  Also, the relative likelihood version of the plausibility region appears in \citet{spjotvoll1972}, \citet{Feldman.Cousins.1998}, and \citet{zhang.woodroofe.2002}.  Each of these papers has a different focus, so the point that there is a simple, useful, and very general method underlying these developments apparently has yet to be made.  The present paper makes such a point.   

In many problems, the parameter of interest is some lower-dimensional function, or component, or feature, of the full parameter.  In this case, there is an interest parameter and a nuisance parameter, and I propose a marginal plausibility function for the interest parameter in Section~\ref{S:nuisance}.  For models with a certain transformation structure, the exact sampling distribution results of the basic plausibility function can be extended to the marginal inference case.  What can be done for models without this structure is discussed, and several of examples are given that demonstrate the method's efficiency.

Throughout I focus on confidence regions, though hypothesis tests are essentially the same.  From the theory and examples, the general message is that the plausibility function-based method is as good or better than existing methods.  In particular, Section~\ref{S:bootstrap} presents a simple but practically important random effects model, and it is shown that the proposed method provides exact inference, while the standard parametric bootstrap fails.  Some concluding remarks are given in Section~\ref{S:remarks}.

\section{Plausibility functions}
\label{S:plausibility}

\subsection{Construction}
\label{SS:construction}

Let $Y$ be a sample from distribution $\prob_\theta$ on $\YY$, where $\theta$ is an unknown parameter taking values in $\Theta$, a separable space; here $Y$ could be, say, a sample of size $n$ from a product measure $\prob_\theta^n$, but I have suppressed the dependence on $n$ in the notation.  

To start, let $\ell: \YY \times \Theta \to [0,\infty)$ be a loss function, i.e., a function such that small values of $\ell(y,\theta)$ indicate that the model with parameter $\theta$ fits data $y$ reasonably well.  This loss function $\ell(y,\theta)$ could be a sort of residual sum-of-squares or the negative log-likelihood.  Assume that there is a minimizer $\hat\theta=\hat\theta(y)$ of the loss function $\ell(y,\theta)$ for each $y$.  Next define the function 
\begin{equation}
\label{eq:Ty}
T_{y,\theta} = \exp[-\{\ell(y,\theta) - \ell(y,\hat\theta)\}],
\end{equation}
The focus of this paper is the case where the loss function is the negative log-likelihood, so the function $T_{y,\theta}$ in \eqref{eq:Ty} is the relative likelihood 
\begin{equation}
\label{eq:relative.likelihood}
T_{y,\theta} = L_y(\theta) \, / \, L_y(\hat\theta), 
\end{equation}
where $L_y(\theta)$ is the likelihood function, assumed to be bounded, and $\hat\theta$ is a maximum likelihood estimator.  Other choices of $T_{y,\theta}$ are possible (see Remark~\ref{re:other.choices} in Section~\ref{S:remarks}) but the use of likelihood is reasonable since it conveniently summarizes all information in $y$ concerning $\theta$.  Let $F_\theta$ be the distribution function of $T_{Y,\theta}$ when $Y \sim \prob_\theta$, i.e.,
\begin{equation}
\label{eq:cdf}
F_\theta(t) = \prob_\theta(T_{Y,\theta} \leq t), \quad t \in \RR.
\end{equation}
Often $F_\theta(t)$ will be a smooth function of $t$ for each $\theta$, but the discontinuous case is also possible.  To avoid measurability difficulties, I shall assume throughout that $F_\theta(t)$ is a continuous function in $\theta$ for each $t$.  Take a generic $A \subseteq \Theta$, and define the function 
\begin{equation}
\label{eq:plausibility}
\pl_y(A) = \sup_{\theta \in A} F_\theta(T_{y,\theta}).  
\end{equation}
This is called the \emph{plausibility function}, and it acts a lot like a p-value \citep{impval}.  Intuitively, $\pl_y(A)$ measures the plausibility of the claim ``$\theta \in A$'' given observation $Y=y$.  When $A = \{\theta\}$ is a singleton set, I shall write $\pl_y(\theta)$ instead of $\pl_y(\{\theta\})$.

\subsection{Use in statistical inference}
\label{SS:inference}

The plausibility function can be used for a variety of statistical inference problems.  First, consider a hypothesis testing problem, $H_0: \theta \in \Theta_0$ versus $H_1: \theta \not\in \Theta_0$.  Define a plausibility function-based test as follows:
\begin{equation}
\label{eq:plausibility.test}
\text{reject $H_0$ if and only if $\pl_y(\Theta_0) \leq \alpha$.} 
\end{equation}
The intuition is that if $\Theta_0$ is not sufficiently plausible, given $Y=y$, then one should conclude that the true $\theta$ is outside $\Theta_0$.  In Section~\ref{SS:theory}, I show that the plausibility function-based test \eqref{eq:plausibility.test} controls the probability of Type~I error at level $\alpha$.  

The plausibility function $\pl_y(\theta)$ can also be used to construct confidence regions.  This will be my primary focus throughout the paper.  Specifically, for any $\alpha \in (0,1)$, define the $100(1-\alpha)$\% plausibility region
\begin{equation}
\label{eq:plausibility.region}
\Pi_y(\alpha) = \{\theta: \pl_y(\theta) > \alpha\}.
\end{equation}
The intuition is that $\theta$ values which are sufficiently plausible, given $Y=y$, are good guesses for the true parameter value.  The size result for the test \eqref{eq:plausibility.test}, along with the well-known connection between confidence regions and hypothesis tests, shows that the plausibility regions \eqref{eq:plausibility.region} has coverage at the nominal $1-\alpha$ level.

Before we discuss sampling distribution properties of the plausibility function in the next section, we consider two important fixed-$y$ properties.   These properties motivated the ``unified approach'' developed by \citet{Feldman.Cousins.1998} and further studied by \citet{zhang.woodroofe.2002}.   
\begin{itemize}
\item The minimizer $\hat\theta$ of the loss $\ell(y,\theta)$ satisfies $\pl_y(\hat\theta) = 1$, so the plausibility region is never empty.  In particular, in the case where $\ell$ is the negative log-likelihood, and $T_{y,\theta}$ is the relative likelihood \eqref{eq:relative.likelihood}, the maximum likelihood estimator is contained in the plausibility region.
\vspace{-2mm}
\item The plausibility function is defined only on $\Theta$; more precisely, $\pl_y(\theta) \equiv 0$ for any $\theta$ outside $\Theta$.  So, if $\Theta$ involves some non-trivial constraints, then only parameter values that satisfy the constraint can be assigned positive plausibility.  This implies that the plausibility region cannot extend beyond the effective parameter space.  Compare this to the standard ``$\hat\theta \pm \text{something}$'' confidence intervals or those based on asymptotic normality.  
\end{itemize}

One could also ask if the plausibility region is connected or, perhaps, even convex.  Unfortunately, like Bayesian highest posterior density regions, the plausibility regions are, in general, neither convex nor connected.  An example of non-convexity can be seen in Figure~\ref{fig:gamma.plot}.  That connectedness might fail is unexpected.  Figure~\ref{fig:pois.pl} shows the plausibility function based on a single Poisson sample $Y=2$ using the relative likelihood \eqref{eq:relative.likelihood}; the small convex portion around $\theta=1$ shows that disconnected plausibility regions are possible.  A better understanding of the complicated dual way that the plausibility function depends  on $\theta$---through $F_\theta$ and through $T_{y,\theta}$---is needed to properly explain this phenomenon.  Discreteness of the distribution $F_\theta$ also plays a role, as I have not seen this local convexity in cases where $F_\theta$ is continuous.  However, suppose that $\theta \mapsto \ell(y,\theta)$ is convex and $F_\theta \equiv F$ does not depend on $\theta$; see Section~\ref{SS:computation}.  In this case, the plausibility region takes the form $\{\theta: T_{y,\theta} > F^{-1}(\alpha)\}$, so convexity and connectedness hold by quasi-concavity of $\theta \mapsto T_{y,\theta}$.  For $T_{Y,\theta}$ the relative likelihood \eqref{eq:relative.likelihood}, under standard conditions, $-2\log T_{Y,\theta}$ is asymptotically chi-square under $\prob_\theta$ for any fixed $\theta$, so the limiting distribution function of $T_{Y,\theta}$ is, indeed, free of $\theta$.  Therefore, one could expect convexity of the plausibility region when the sample size is sufficiently large; see Figure~\ref{fig:probit.plot}. 

\begin{figure}
\begin{center}
\scalebox{0.55}{\includegraphics{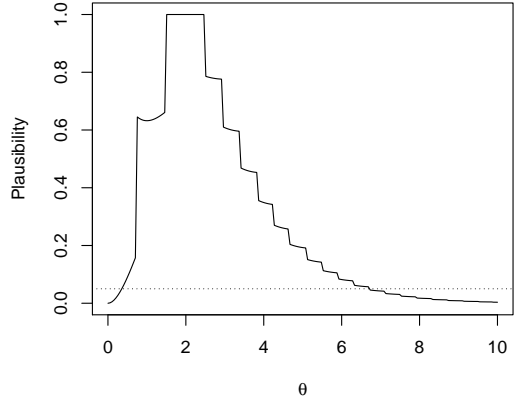}}
\end{center}
\caption{Plausibility function for $\theta$ based on a single Poisson sample $Y=2$.}
\label{fig:pois.pl}
\end{figure}

Besides as a tool for constructing frequentist procedures, plausibility functions have some potentially deeper applications.  Indeed, this plausibility function approach has some connections with the new inferential model framework \citep{imbasics, imcond} which employs random sets for valid posterior probabilistic inference without priors.  See Remark~\ref{re:ds.theory} in Section~\ref{S:remarks} for further discussion.  Also, the connection between plausibility functions and p-values is discussed in \citet{impval}.

\subsection{Sampling distribution properties}
\label{SS:theory}

Here I describe the sampling distribution of $\pl_Y(A)$ as a function of $Y \sim \prob_\theta$ for a fixed $A$.  This is critical to the advertised exactness of the proposed procedures.  One technical point: continuity of $F_\theta$ in $\theta$ and separability of $\Theta$ ensure that $\pl_y(A)$ is a measurable function in $y$ for each $A$, so the following probability statements make sense.  

\begin{theorem}
\label{thm:validity}
Let $A$ be a subset of $\Theta$.  For any $\theta \in A$, if $Y \sim \prob_\theta$, then $\pl_Y(A)$ is stochastically larger than uniform.  That is, for any $\alpha \in (0,1)$, 
\begin{equation}
\label{eq:validity}
\sup_{\theta \in A} \prob_\theta\{\pl_Y(A) \leq \alpha\} \leq \alpha.
\end{equation}
\end{theorem}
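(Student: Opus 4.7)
The plan is to reduce the claim to a standard probability integral transform argument by exploiting the supremum in the definition of $\pl_y(A)$. The key observation is that for any \emph{fixed} $\theta \in A$, the quantity $F_\theta(T_{Y,\theta})$ appears as one of the terms in the supremum defining $\pl_Y(A)$, and is therefore a pointwise lower bound on $\pl_Y(A)$. So the bulk of the work is really about the distribution of $F_\theta(T_{Y,\theta})$ under $\prob_\theta$.

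Concretely, I would proceed in three steps. First, fix $\theta \in A$ and observe that, directly from \eqref{eq:plausibility},
\[
\pl_Y(A) \;=\; \sup_{\theta' \in A} F_{\theta'}(T_{Y,\theta'}) \;\geq\; F_\theta(T_{Y,\theta}),
\]
so that $\{\pl_Y(A) \leq \alpha\} \subseteq \{F_\theta(T_{Y,\theta}) \leq \alpha\}$ and therefore
\[
\prob_\theta\{\pl_Y(A) \leq \alpha\} \;\leq\; \prob_\theta\{F_\theta(T_{Y,\theta}) \leq \alpha\}.
\]
Second, I would appeal to the probability integral transform applied to the random variable $T_{Y,\theta}$ under $Y \sim \prob_\theta$, whose distribution function is $F_\theta$ by \eqref{eq:cdf}. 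In the continuous case this gives $F_\theta(T_{Y,\theta}) \sim \unif(0,1)$ exactly; in general (when $F_\theta$ may have jumps) the standard generalized PIT yields that $F_\theta(T_{Y,\theta})$ is stochastically no smaller than uniform, hence
\[
\prob_\theta\{F_\theta(T_{Y,\theta}) \leq \alpha\} \;\leq\; \alpha
\]
for every $\alpha \in (0,1)$. Third, since the bound holds for each $\theta \in A$ individually, taking the supremum over $\theta \in A$ on the left-hand side yields \eqref{eq:validity}.

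There is essentially no deep obstacle here; the only subtlety worth addressing explicitly is the discontinuous case, where one should be careful that $\prob_\theta\{F_\theta(T_{Y,\theta}) \leq \alpha\} \leq \alpha$ rather than equal to $\alpha$. I would verify this via the usual argument: if $t^\ast = \sup\{t : F_\theta(t) \leq \alpha\}$, then $\{F_\theta(T_{Y,\theta}) \leq \alpha\} \subseteq \{T_{Y,\theta} \leq t^\ast\}$, whose probability is $F_\theta(t^\ast) \leq \alpha$ by left-continuity of the argument. The measurability remark in the paragraph preceding the theorem (continuity of $\theta \mapsto F_\theta(t)$ together with separability of $\Theta$) is what makes the supremum $\pl_Y(A)$ a bona fide random variable, so the probability statement is well posed. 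No appeal to asymptotics or to properties of the loss function is required, which is precisely why the result holds in finite samples and justifies the exactness claims for the test \eqref{eq:plausibility.test} and the region \eqref{eq:plausibility.region}.
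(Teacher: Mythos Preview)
Your proposal is correct and follows essentially the same route as the paper's own proof: fix $\theta \in A$, use $\pl_Y(A) \geq F_\theta(T_{Y,\theta})$ to reduce to the probability integral transform, handle the continuous and discontinuous cases separately, and then take the supremum over $\theta \in A$. One small quibble: in your sketch for the discontinuous case, the step ``$F_\theta(t^\ast) \leq \alpha$ by left-continuity'' is not quite right as stated (CDFs are right-continuous, and $F_\theta(t^\ast)$ can exceed $\alpha$ when $t^\ast$ is not attained), but the conclusion $\prob_\theta\{F_\theta(T_{Y,\theta}) \leq \alpha\} \leq \alpha$ is of course the standard fact the paper also invokes without proof, so this does not affect the validity of your argument.
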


\begin{proof}
Take any $\alpha \in (0,1)$ and any $\theta \in A$.  Then, by definition of $\pl_y(A)$ and monotonicity of the probability measure $\prob_\theta$, I get
\begin{equation}
\label{eq:bound}
\prob_\theta\{\pl_Y(A) \leq \alpha\} = \prob_\theta\{\textstyle\sup_{\theta \in A} F_\theta(T_{Y,\theta}) \leq \alpha\} \leq \prob_\theta\{F_\theta(T_{Y,\theta}) \leq \alpha\}. 
\end{equation}
The random variable $T_{Y,\theta}$, as a function $Y \sim \prob_\theta$, may be continuous or not.  In the continuous case, $F_\theta$ is a smooth distribution function and $F_\theta(T_{Y,\theta})$ is uniformly distributed.  In the discontinuous case, $F_\theta$ has jump discontinuities, but it is well-known that $F_\theta(T_{Y,\theta})$ is stochastically larger than uniform.  In either case, the latter term in \eqref{eq:bound} can be bounded above by $\alpha$.  Taking supremum over $\theta \in A$ throughout \eqref{eq:bound} gives the result in \eqref{eq:validity}.  
\end{proof}

The claim that the plausibility function-based test in \eqref{eq:plausibility.test} achieves the nominal frequentist size follows as an immediate corollary of Theorem~\ref{thm:validity}.

\begin{corollary}
\label{co:size}
For any $\alpha \in (0,1)$, the size of the test \eqref{eq:plausibility.test} is no more than $\alpha$.  That is, $\sup_{\theta \in \Theta_0} \prob_\theta\{\pl_Y(\Theta_0) \leq \alpha\} \leq \alpha$.  Moreover, if $H_0$ is a point-null, so that $\Theta_0$ is a singleton, and $T_{Y,\theta}$ is a continuous random variable when $Y \sim \prob_\theta$, then the size is exactly $\alpha$.  
\end{corollary}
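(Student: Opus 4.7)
The first claim is essentially an immediate specialization of Theorem~\ref{thm:validity}. My plan is to take $A = \Theta_0$ in the theorem. Then \eqref{eq:validity} reads $\sup_{\theta \in \Theta_0} \prob_\theta\{\pl_Y(\Theta_0) \leq \alpha\} \leq \alpha$, which is exactly the definition of the size of the test in \eqref{eq:plausibility.test}. So this part requires no additional work beyond citing the theorem.

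For the second claim, the key observation is that when $\Theta_0 = \{\theta_0\}$, the supremum in the definition \eqref{eq:plausibility} collapses: for any $y$,
\begin{equation*}
\pl_y(\theta_0) = \sup_{\theta \in \{\theta_0\}} F_\theta(T_{y,\theta}) = F_{\theta_0}(T_{y,\theta_0}).
\end{equation*}
Therefore, under $Y \sim \prob_{\theta_0}$, the random variable $\pl_Y(\theta_0)$ coincides with $F_{\theta_0}(T_{Y,\theta_0})$, with no loss from the monotonicity step that was used to derive \eqref{eq:bound} in the proof of Theorem~\ref{thm:validity}.

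Next I invoke the assumed continuity of $T_{Y,\theta_0}$ under $\prob_{\theta_0}$. By the probability integral transform, $F_{\theta_0}(T_{Y,\theta_0})$ is then exactly uniform on $(0,1)$, so $\prob_{\theta_0}\{F_{\theta_0}(T_{Y,\theta_0}) \leq \alpha\} = \alpha$. Combining these two observations yields $\prob_{\theta_0}\{\pl_Y(\theta_0) \leq \alpha\} = \alpha$, and since $\Theta_0 = \{\theta_0\}$ the supremum over $\Theta_0$ is trivial, giving size equal to $\alpha$.

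There is really no genuine obstacle here: the corollary is designed to follow directly from Theorem~\ref{thm:validity}. The only mild subtlety worth flagging in the writeup is explaining precisely why the inequality in \eqref{eq:bound} and the stochastic ordering step in the theorem's proof both become equalities in the point-null, continuous case, so that no slack remains between $\prob_{\theta_0}\{\pl_Y(\theta_0) \leq \alpha\}$ and $\alpha$.
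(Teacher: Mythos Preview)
Your proposal is correct and takes essentially the same approach as the paper, which simply writes ``Apply Theorem~\ref{thm:validity} with $A = \Theta_0$.'' You actually supply more detail than the paper does for the second claim about exact size in the point-null continuous case; the paper defers that reasoning to the proof of Theorem~\ref{thm:singleton}, whereas you spell it out here directly.
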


\begin{proof}
Apply Theorem~\ref{thm:validity} with $A = \Theta_0$.  
\end{proof}

As indicated earlier, the case where $A$ is a singleton set is an important special case for point estimation and plausibility region construction.  The result in Theorem~\ref{thm:validity} specializes nicely in this singleton case.  

\begin{theorem}
\label{thm:singleton}
\emph{(i)} If $T_{Y,\theta}$ is a continuous random variable as a function of $Y \sim \prob_\theta$, then $\pl_Y(\theta)$ is uniformly distributed.  \emph{(ii)} If $T_{Y,\theta}$ is a discrete random variable when $Y \sim \prob_\theta$, then $\pl_Y(\theta)$ is stochastically larger than uniform. 
\end{theorem}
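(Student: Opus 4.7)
The plan is to recognize that Theorem~\ref{thm:singleton} is essentially the pointwise, sharpened version of Theorem~\ref{thm:validity}: when $A=\{\theta\}$ is a singleton, the supremum in the definition of $\pl_y(A)$ collapses, so the inequality in \eqref{eq:bound} becomes an equality and we obtain the exact distribution of $\pl_Y(\theta)$ rather than a one-sided bound.

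For part (i), the first step is to unwind the definition to write $\pl_Y(\theta) = F_\theta(T_{Y,\theta})$, with no supremum. Under the hypothesis that $T_{Y,\theta}$ is continuous as a function of $Y\sim\prob_\theta$, the distribution function $F_\theta$ in \eqref{eq:cdf} is continuous on $\RR$. The classical probability integral transform then gives $F_\theta(T_{Y,\theta})\sim\unif(0,1)$, which is exactly the claim. This is the core observation and requires no real calculation beyond citing the PIT.

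For part (ii), the same identity $\pl_Y(\theta) = F_\theta(T_{Y,\theta})$ holds, and the conclusion is immediate from Theorem~\ref{thm:validity} specialized to the singleton $A=\{\theta\}$; alternatively, one can invoke directly the well-known fact that for any (not necessarily continuous) random variable $X$ with distribution function $F$, one has $\prob\{F(X)\leq u\}\leq u$ for every $u\in(0,1)$. Either route yields that $\pl_Y(\theta)$ is stochastically larger than uniform.

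There is no real obstacle here: the only subtlety worth flagging is the distinction between the continuous and discrete cases, which is exactly why Theorem~\ref{thm:validity} only produces a stochastic bound in general but sharpens to an equality in distribution under continuity. The proof in the paper will therefore be very short, essentially two applications of standard facts about the transform $F(X)$.
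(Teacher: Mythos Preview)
Your proposal is correct and matches the paper's own proof essentially line for line: the paper notes that for a singleton $A=\{\theta\}$ one has $\pl_Y(\theta)=F_\theta(T_{Y,\theta})$ with no supremum, so the ``$\leq$'' in \eqref{eq:bound} becomes ``$=$,'' and then the continuous/discrete dichotomy is handled exactly as in Theorem~\ref{thm:validity} via the probability integral transform and its stochastic-dominance version. There is nothing to add or correct.
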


\begin{proof}
In this case, $\pl_Y(\theta) = F_\theta(T_{Y,\theta})$ so no optimization is required compared to the general $A$ case.  Therefore, the ``$\leq$'' between the second and third terms in \eqref{eq:bound} becomes an ``$=$.''  The rest of the proof goes exactly like that of Theorem~\ref{thm:validity}.  
\end{proof}  

The promised result on the frequentist coverage probability of the plausibility region $\Pi_y(\alpha)$ in \eqref{eq:plausibility.region} follows as an immediate corollary of Theorem~\ref{thm:singleton}

\begin{corollary}
\label{co:coverage}
For any $\alpha \in (0,1)$, the plausibility region $\Pi_Y(\alpha)$ has the nominal frequentist coverage probability; that is, $\prob_\theta\{\Pi_Y(\alpha) \ni \theta\} \geq 1-\alpha$.  Furthermore, the coverage probability is exactly $1-\alpha$ in case \emph{(i)} of Theorem~\ref{thm:singleton}.   
\end{corollary}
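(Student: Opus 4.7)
The plan is to translate the coverage event $\{\Pi_Y(\alpha) \ni \theta\}$ into a sampling statement about the singleton plausibility $\pl_Y(\theta)$ and then invoke Theorem~\ref{thm:singleton} directly. By the definition of the plausibility region in \eqref{eq:plausibility.region}, membership $\theta \in \Pi_Y(\alpha)$ is equivalent to the inequality $\pl_Y(\theta) > \alpha$. So I would write
\[
\prob_\theta\{\Pi_Y(\alpha) \ni \theta\} = \prob_\theta\{\pl_Y(\theta) > \alpha\} = 1 - \prob_\theta\{\pl_Y(\theta) \leq \alpha\}.
\]
The measurability discussion preceding Theorem~\ref{thm:validity} ensures that $\pl_Y(\theta)$ is a measurable function of $Y$ for each fixed $\theta$, so the probability on the right-hand side is well-defined.

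Applying part (ii) of Theorem~\ref{thm:singleton} yields $\prob_\theta\{\pl_Y(\theta) \leq \alpha\} \leq \alpha$ in full generality, which gives the bound $\prob_\theta\{\Pi_Y(\alpha) \ni \theta\} \geq 1-\alpha$. Under the continuity hypothesis of Theorem~\ref{thm:singleton}(i), $\pl_Y(\theta)$ is exactly uniform on $(0,1)$, and the same display then yields coverage equal to $1-\alpha$ exactly. The only real step in this argument is unpacking the definition of $\Pi_Y(\alpha)$; there is no genuine obstacle, since this is precisely the well-known test-to-confidence-region duality specialized to the singleton case already treated in Theorem~\ref{thm:singleton}.
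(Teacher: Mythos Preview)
Your argument is correct and matches the paper's proof essentially line for line: both unpack the definition of $\Pi_Y(\alpha)$ to rewrite the coverage probability as $\prob_\theta\{\pl_Y(\theta) > \alpha\}$ and then appeal directly to Theorem~\ref{thm:singleton}. One small wording fix: the bound $\prob_\theta\{\pl_Y(\theta) \leq \alpha\} \leq \alpha$ ``in full generality'' follows from parts~(i) and~(ii) of Theorem~\ref{thm:singleton} together (uniform in the continuous case, stochastically larger than uniform in the discrete case), not from part~(ii) alone.
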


\begin{proof}
Observe that $\prob_\theta\{\Pi_Y(\alpha) \ni \theta\} = \prob_\theta\{\pl_Y(\theta) > \alpha\}$.  So according to Theorem~\ref{thm:singleton}, this probability is at least $1-\alpha$.  Moreover, in case (i) of Theorem~\ref{thm:singleton}, $\pl_Y(\theta)$ is uniformly distributed, so ``at least'' can be changed to ``equal to.'' 
\end{proof}

Theorems~\ref{thm:validity} and \ref{thm:singleton}, and their corollaries, demonstrate that the proposed method is valid for any model, any problem, and any sample size.  Compare this to the bootstrap or analytical approximations whose theoretical validity holds only asymptotically for suitably regular problems.  One could also ask about the asymptotic behavior of the plausibility function.  Such a question is relevant because exactness without efficiency may not be particularly useful, i.e., it is desirable that the method can efficiently detect wrong $\theta$ values.  Take, for example, the case where $T_{y,\theta}$ is the relative likelihood. If $Y=(Y_1,\ldots,Y_n)$ are iid $\prob_\theta$, then $-2\log T_{Y,\theta}$ is, under mild conditions, approximately chi-square distributed.  This means that the dependence of $F_\theta$ on $\theta$ disappears, asymptotically, so , for large $n$, the plausibility region \eqref{eq:plausibility.region} is similar to 
\[ \{\theta: -2\log T_{y,\theta} < \chi^2(\alpha)\}, \]
where $\chi^2(\alpha)$ is the $100\alpha$ percentile of the appropriate chi-square distribution.  Figure~\ref{fig:probit.plot} displays both of these regions and the similarity is evident.  Since the approximate plausibility region in the above display has asymptotic coverage $1-\alpha$ and is efficient in terms of volume, the efficiency conclusion carries over to the plausibility region.  

For a more precise description of the asymptotic behavior of the plausibility function, I now present a simple but general and rigorous result.  Again, since the plausibility function-based methods are valid for all fixed sample sizes, the motivation for this asymptotic investigation is efficiency.  Let $Y=(Y_1,\ldots,Y_n)$ be iid $\prob_{\theta^\star}$.  Suppose that the loss function is additive, i.e., $\ell(y,\theta) = \sum_{i=1}^n h(y_i,\theta)$, and the function $h$ is such that $H(\theta) = \E_{\theta^\star}\{h(Y_1,\theta)\}$ exists, is finite for all $\theta$, and has a unique minimum at $\theta=\theta^\star$.  Also assume that, for sufficiently large $n$, the distribution of $T_{Y,\theta}$ under $\prob_\theta$ has no atom at zero.  Write $\pl_n$ for the plausibility function $\pl_Y=\pl_{(Y_1,\ldots,Y_n)}$.  

\begin{theorem}
\label{thm:limit}
Under the conditions in the previous paragraph, $\pl_n(\theta) \to 0$ with $\prob_{\theta^\star}$-probability~1 for any $\theta \neq \theta^\star$.  
\end{theorem}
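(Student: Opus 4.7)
The plan is to reduce the convergence of $\pl_n(\theta) = F_{\theta,n}(T_n)$---writing $T_n = T_{Y,\theta}$ and using the subscript to make explicit the $n$-dependence of the cdf $F_\theta$---to two separate claims: $(a)$ $T_n \to 0$ almost surely under $\prob_{\theta^\star}$, and $(b)$ this, combined with the no-atom-at-zero hypothesis, forces $F_{\theta,n}(T_n) \to 0$ as well.

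For $(a)$, I would apply the strong law to the additive loss. Because $\hat\theta$ minimizes $\ell(Y,\cdot)$, one has $\ell(Y,\hat\theta) \leq \ell(Y,\theta^\star)$, so
\[
\ell(Y,\theta) - \ell(Y,\hat\theta) \;\geq\; \ell(Y,\theta) - \ell(Y,\theta^\star) \;=\; \sum_{i=1}^n \bigl\{h(Y_i,\theta) - h(Y_i,\theta^\star)\bigr\}.
\]
Dividing by $n$, the right-hand side tends $\prob_{\theta^\star}$-a.s.\ to $H(\theta) - H(\theta^\star)$, which is strictly positive because $\theta^\star$ is the unique minimizer of $H$. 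Hence there is $\gamma > 0$ with $T_n \leq e^{-\gamma n}$ eventually, a.s.; in fact $T_n$ decays exponentially.

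For $(b)$, the idea is that, under $\prob_\theta$, $T_n^* := T_{Y^*,\theta}$ does \emph{not} concentrate near $0$ at any exponential rate. The no-atom-at-zero hypothesis yields $F_{\theta,n}(0)=0$ for all large $n$, and the Wilks-style heuristic $-2\log T_n^* \approx \chi^2$ noted in the discussion preceding the theorem indicates $-\log T_n^* = O_p(1)$ under $\prob_\theta$. The concrete step is to show that, for every $\eps>0$,
\[
F_{\theta,n}(e^{-\gamma n}) \;=\; \prob_\theta(-\log T_n^* \geq \gamma n) \;\leq\; \eps \quad \text{for all large } n;
\]
monotonicity of $F_{\theta,n}$ together with the exponential bound from $(a)$ then gives $\pl_n(\theta) \leq F_{\theta,n}(e^{-\gamma n}) \leq \eps$ eventually, a.s.

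The main obstacle is $(b)$: because $F_{\theta,n}$ itself varies with $n$, the bare fact that $T_n\to 0$ does not automatically imply $F_{\theta,n}(T_n)\to 0$, so a quantitative left-tail bound on $T_n^*$ under $\prob_\theta$ is required. The no-atom-at-zero hypothesis is exactly what prevents $F_{\theta,n}$ from placing nontrivial mass at $0$ along a subsequence—which would break the conclusion—and the remaining work is to upgrade that qualitative statement to the rate $F_{\theta,n}(e^{-\gamma n}) \to 0$ that matches the exponential decay of $T_n$ established in $(a)$.
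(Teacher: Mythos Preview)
Your decomposition into $(a)$ and $(b)$ is exactly the paper's strategy. Part $(a)$ matches the paper's proof almost line by line: bound $T_{Y,\theta}$ above by $\exp[-n\{H_n(\theta)-H_n(\theta^\star)\}]$ using $\ell(Y,\hat\theta)\leq\ell(Y,\theta^\star)$, then invoke the strong law to get the exponent eventually positive almost surely.

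The interesting point is $(b)$. You have correctly flagged a subtlety the paper glosses over. The paper's proof simply says: since $T_{Y,\theta}$ has no atom at zero under $\prob_\theta$, the distribution function $F_\theta$ is continuous at $0$ with $F_\theta(0)=0$, hence $\pl_n(\theta)\to 0$. That is, it treats $F_\theta$ as a single fixed function evaluated along a sequence tending to zero, ignoring that $F_\theta=F_{\theta,n}$ itself changes with $n$. Your concern---that the bare no-atom hypothesis does not automatically force $F_{\theta,n}(e^{-\gamma n})\to 0$---is legitimate: one can imagine a sequence of atomless laws on $(0,1]$ whose mass piles up near zero fast enough to defeat the conclusion, and the stated assumptions do not obviously exclude this. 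Indeed, the paper itself remarks immediately after the proof that uniform control on the small quantiles of $F_\theta$ is ``difficult to analyze'' and that ``more work is needed here.''

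So your proposal is not deficient relative to the paper; if anything, it is more careful. To match the paper's level of rigor you need only write the final step as the paper does. To actually close the gap you have identified, one would need the quantitative left-tail bound you describe (e.g., via a Wilks-type argument giving $-\log T_n^\star = O_p(1)$ under $\prob_\theta$), which the paper does not supply under the hypotheses stated.
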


\begin{proof}
Let $\hat\theta=\hat\theta(Y)$ denote the loss minimizer.  Then $\ell(Y,\theta^\star) \geq \ell(Y,\hat\theta)$, so 
\begin{align*}
T_{Y,\theta} & = \exp[-\{\ell(Y,\theta) - \ell(Y,\hat\theta)\}] \\
& \leq \exp[-\{\ell(Y,\theta) - \ell(Y,\theta^\star)\}] \\
& = \exp[-n\{ H_n(\theta) - H_n(\theta^\star)\} ], 
\end{align*}
where $H_n(\theta) = n^{-1} \sum_{i=1}^n h(Y_i,\theta)$ is the empirical version of $H(\theta)$.  Since $F_\theta(\cdot)$ is non-decreasing, 
\[ \pl_n(\theta) = F_\theta(T_{Y,\theta}) \leq F_\theta(e^{-n\{H_n(\theta) - H_n(\theta^\star)\}}). \]
By the assumptions on the loss and the law of large numbers, with $\prob_{\theta^\star}$-probability~1, there exists $N$ such that $H_n(\theta) - H_n(\theta^\star) > 0$ for all $n \geq N$.  Therefore, the exponential term in the above display vanishes with $\prob_{\theta^\star}$-probability~1.  Since $T_{Y,\theta}$ has no atom at zero under $\prob_\theta$, the distribution function $F_\theta(t)$ is continuous at $t=0$ and satisfies $F_\theta(0)=0$.  It follows that $\pl_n(\theta) \to 0$ with $\prob_{\theta^\star}$-probability~1.  
\end{proof}
 
The conclusion of Theorem~\ref{thm:limit} is that the plausibility function will correctly distinguish between the true $\theta^\star$ and any $\theta \neq \theta^\star$ with probability~1 for large $n$.  In other words, if $n$ is large, then the plausibility region will not contain points too far from $\theta^\star$, hence efficiency.  For the case of the relative likelihood, the difference $H_n(\theta)-H_n(\theta^\star)$ in the proof converges to the Kullback--Leibler divergence of $\prob_\theta$ from $\prob_{\theta^\star}$, which is strictly positive under the uniqueness condition on $\theta^\star$, i.e., identifiability.  

It is possible to strengthen the convergence result in Theorem~\ref{thm:limit}, at least for the relative likelihood case, with the use of tools from the theory of empirical processes, as in \citet{wongshen1995}.  However, I have found that this approach also requires some uniform control on the small quantiles of the distribution $F_\theta$ for $\theta$ away from $\theta^\star$.  These quantiles are difficult to analyze, so more work is needed here.

\subsection{Implementation}
\label{SS:computation}

Evaluation of $F_\theta(T_{y,\theta})$ is crucial to the proposed methodology.  In some problems, it may be possible to derive the distribution $F_\theta$ in either closed-form or in terms of some functions that can be readily evaluated, but such problems are rare.  So, numerical methods are needed to evaluate the plausibility function and, here, I present a simple Monte Carlo approximation of $F_\theta$.  See, also, Remark~\ref{re:monte.carlo} in Section~\ref{S:remarks}. 

To approximate $F_\theta(T_{y,\theta})$, where $Y=y$ is the observed sample, first choose a large number $M$; unless otherwise stated, the examples herein use $M=50,000$, which is conservative.  Then construct the following root-$M$ consistent estimate of $F_\theta(T_{y,\theta})$:
\begin{equation}
\label{eq:monte.carlo}
\widehat F_\theta(T_{y,\theta}) = \frac1M \sum_{m=1}^M I\{T_{Y^{(m)},\theta} \leq T_{y,\theta}\}, \quad Y^{(1)},\ldots,Y^{(M)} \iid \prob_\theta. 
\end{equation}
This strategy can be performed for any choice of $\theta$, so we may consider $\widehat F_\theta(T_{y,\theta})$ as a function of $\theta$.  If necessary, the supremum over a set $A \subset \Theta$ can be evaluated using a standard optimization package; in my experience, the {\tt optim} function in R works well.  To compute a plausibility interval, solutions to the equation $\widehat F_\theta(T_{y,\theta}) = \alpha$ are required.  These can be obtained using, for example, standard bisection or stochastic approximation \citep{garthwaite.buckland.1992}.  

An interesting question is if, and under what conditions, the distribution function $F_\theta$ does not depend on $\theta$.  Indeed, if $F_\theta$ is free of $\theta$, then there is no need to simulate new $Y^{(m)}$'s for different $\theta$'s---the same Monte Carlo sample can be used for all $\theta$---which amounts to substantial computational savings.  Next I describe a general context where this $\theta$-independence can be discussed.  

Let $\G$ be a group of transformations $g:\YY \to \YY$, and let $\Gbar$ be a corresponding group of transformations $\gbar: \Theta \to \Theta$ defined by the invariance condition:
\begin{equation}
\label{eq:invariant}
\text{if $Y \sim \prob_\theta$, then $gY \sim \prob_{\gbar\theta}$},
\end{equation}
where, e.g., $gy$ denotes the image of $y$ under transformation $g$.  Note that $g$ and $\gbar$ are tied together by the relation \eqref{eq:invariant}.  Models that satisfy \eqref{eq:invariant} are called group transformation models.  The next result, similar to Corollary~1 in \citet{spjotvoll1972}, shows that $F_\theta$ is free of $\theta$ in group transformation models when $T_{y,\theta}$ has a certain invariance property.  

\begin{theorem}
\label{thm:efficiency}
Suppose \eqref{eq:invariant} holds for groups $\G$ and $\Gbar$ as described above.  If $\Gbar$ is transitive on $\Theta$ and $T_{y,\theta}$ satisfies
\begin{equation}
\label{eq:T.invariant}
T_{gy,\gbar\theta} = T_{y,\theta} \quad \text{for all $y \in \YY$ and $g \in \G$},
\end{equation}
then the distribution function $F_\theta$ in \eqref{eq:cdf} does not depend on $\theta$.   
\end{theorem}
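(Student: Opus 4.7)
The plan is to show that $F_{\theta_1} = F_{\theta_2}$ for any pair $\theta_1,\theta_2 \in \Theta$ by transporting a sample from $\prob_{\theta_1}$ to one from $\prob_{\theta_2}$ via a suitable group element, and then using the invariance of $T$ to keep the random variable being evaluated the same. Transitivity of $\Gbar$ supplies the group element, and the condition \eqref{eq:T.invariant} does the transporting.

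More concretely, fix $\theta_1,\theta_2 \in \Theta$ and some $t \in \RR$. Since $\Gbar$ is transitive on $\Theta$, I can pick $\gbar \in \Gbar$ with $\gbar\theta_1 = \theta_2$, and let $g \in \G$ be the corresponding transformation tied to $\gbar$ by \eqref{eq:invariant}. Let $Y \sim \prob_{\theta_1}$; then by \eqref{eq:invariant}, $gY \sim \prob_{\theta_2}$, so I can write
\[
F_{\theta_2}(t) = \prob_{\theta_2}(T_{Y',\theta_2} \leq t) = \prob_{\theta_1}(T_{gY,\gbar\theta_1} \leq t).
\]
Now the invariance assumption \eqref{eq:T.invariant} gives $T_{gY,\gbar\theta_1} = T_{Y,\theta_1}$ pointwise in $Y$, so the right-hand side collapses to $\prob_{\theta_1}(T_{Y,\theta_1} \leq t) = F_{\theta_1}(t)$. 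Since $\theta_1,\theta_2,t$ were arbitrary, $F_\theta(t)$ is constant in $\theta$ for every $t$, which is the desired conclusion.

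There is no real obstacle here; the argument is essentially a one-line change-of-variables once the two hypotheses are lined up properly. The only thing to be careful about is the bookkeeping: transitivity is invoked to guarantee existence of a $\gbar$ sending $\theta_1$ to $\theta_2$, the group relation \eqref{eq:invariant} is invoked to match distributions, and \eqref{eq:T.invariant} is invoked to identify the two random variables pathwise. If one wished to be even more explicit, the same calculation could be phrased as a pushforward statement: the law of $T_{Y,\theta}$ under $\prob_\theta$ is invariant under the simultaneous action $(Y,\theta) \mapsto (gY,\gbar\theta)$, and since this action is transitive in the $\theta$ coordinate, the law cannot depend on $\theta$.
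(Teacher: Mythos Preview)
Your proof is correct and follows essentially the same approach as the paper's. The paper fixes a reference point $\theta_0$ and shows that every $F_\theta$ coincides with $F_{\theta_0}$, while you show directly that $F_{\theta_1}=F_{\theta_2}$ for arbitrary pairs, but the underlying argument---transitivity to supply the group element, \eqref{eq:invariant} to transport the law, and \eqref{eq:T.invariant} to identify the statistics---is identical.
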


\begin{proof}
For $Y \sim \prob_\theta$, pick any fixed $\theta_0$ and choose corresponding $g,\gbar$ such that $\theta = \gbar\theta_0$; such a choice is possible by transitivity.  Let $Y_0 \sim \prob_{\theta_0}$, so that $gY_0$ also has distribution $\prob_\theta$.  Since $T_{gY_0,\gbar\theta_0} = T_{Y_0,\theta_0}$, by \eqref{eq:T.invariant}, it follows that $T_{Y_0,\theta_0}$ has distribution free of $\theta$.  
\end{proof}

For a given function $T_{y,\theta}$, condition \eqref{eq:T.invariant} needs to be checked.  For the loss function-based description of $T_{y,\theta}$, if $\ell(y,\theta)$ is invariant with respect to $\G$, i.e., 
\[ \ell(gy, \gbar \theta) = \ell(y,\theta), \quad \forall \; (g,\gbar), \quad \forall \; (y,\theta), \]
and if the loss minimizer $\hat\theta=\hat\theta(y)$ is equivariant, i.e., 
\[ \hat\theta(gy) = \gbar \hat\theta(y), \quad \forall \; (g,\gbar), \quad \forall \; y, \]
then \eqref{eq:T.invariant} holds.  For the special case where $\ell(y,\theta)$ is the negative log-likelihood, so that $T_{y,\theta}$ is the relative likelihood \eqref{eq:relative.likelihood}, we have the following result.  

\begin{corollary}
\label{co:efficiency}
Suppose the model has dominating measure relatively invariant with respect to $\G$.  Then \eqref{eq:invariant} holds, and the relative likelihood $T_{Y,\theta}$, in \eqref{eq:relative.likelihood}, satisfies \eqref{eq:T.invariant}.  Therefore, if $\Gbar$ is transitive, the distribution function $F_\theta$ in \eqref{eq:cdf} does not depend on $\theta$.  
\end{corollary}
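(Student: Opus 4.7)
The plan is to verify the two structural hypotheses of Theorem~\ref{thm:efficiency}---namely the group invariance \eqref{eq:invariant} of the sampling distributions and the invariance \eqref{eq:T.invariant} of the relative likelihood---and then invoke that theorem directly, using the assumed transitivity of $\Gbar$. All the content is essentially bookkeeping with the multiplier of the relatively invariant dominating measure, so the main conceptual step is to unpack ``relatively invariant'' and use it to tie the densities $p_\theta$ and $p_{\gbar\theta}$.

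First, I would fix conventions. Let $\mu$ be the dominating measure and let $\chi:\G\to(0,\infty)$ be the multiplier, so that $\mu(gA)=\chi(g)\,\mu(A)$ for every measurable $A\subseteq\YY$ and every $g\in\G$. Let $p_\theta=d\prob_\theta/d\mu$. In a group transformation model built on a relatively invariant dominating measure, $\Gbar$ is constructed precisely so that
\[ p_{\gbar\theta}(gy)\,\chi(g) = p_\theta(y) \]
for $\mu$-almost every $y$ and every $g\in\G$. A short change-of-variables computation then shows that $gY$ has density $p_{\gbar\theta}$ with respect to $\mu$ whenever $Y\sim\prob_\theta$, which is exactly \eqref{eq:invariant}.

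Next, the same identity gives $L_{gy}(\gbar\theta)=p_{\gbar\theta}(gy)=\chi(g)^{-1}L_y(\theta)$, so the map $\theta\mapsto L_{gy}(\gbar\theta)$ is a positive constant (depending only on $g$) times $\theta\mapsto L_y(\theta)$. Because $\gbar$ is a bijection of $\Theta$, maximizing both sides gives the equivariance relation $\hat\theta(gy)=\gbar\hat\theta(y)$. Plugging this into the definition \eqref{eq:relative.likelihood} yields
\[ T_{gy,\gbar\theta} = \frac{L_{gy}(\gbar\theta)}{L_{gy}(\hat\theta(gy))} = \frac{\chi(g)^{-1}L_y(\theta)}{\chi(g)^{-1}L_y(\hat\theta(y))} = T_{y,\theta}, \]
which is \eqref{eq:T.invariant}.

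With \eqref{eq:invariant} and \eqref{eq:T.invariant} established and $\Gbar$ transitive by assumption, Theorem~\ref{thm:efficiency} applies and delivers the conclusion that $F_\theta$ is free of $\theta$. The only real subtlety I expect is keeping the convention of the multiplier straight (depending on the source one may have $\mu(gA)=\chi(g)\mu(A)$ or its reciprocal), and handling the $\mu$-null-set ambiguity in ``$p_\theta$''; choosing a pointwise representative of the density, as is customary in the likelihood literature, sidesteps this and makes the pointwise identities above literal rather than almost-sure.
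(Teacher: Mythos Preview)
Your proposal is correct and follows essentially the same route as the paper's proof. The paper simply cites \citet{eaton1989} for the invariance \eqref{eq:invariant} and for the multiplier identity $L_y(\theta)=L_{gy}(\gbar\theta)\,\chi(g)$, then notes that this identity immediately yields \eqref{eq:T.invariant} and invokes Theorem~\ref{thm:efficiency}; you instead unpack the relatively-invariant-measure bookkeeping explicitly to derive that same multiplier identity, and you make the equivariance of the maximizer $\hat\theta(gy)=\gbar\hat\theta(y)$ explicit where the paper leaves it implicit.
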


\begin{proof}
\citet[][Theorem~3.1]{eaton1989} establishes \eqref{eq:invariant}.  Moreover, \citet[][pp.~46--47]{eaton1989} argues that, under the stated conditions, the likelihood satisfies $L_y(\theta) = L_{gy}(g\theta) \chi(g)$, for a multiplier $\chi$ that does not depend on $y$ or $\theta$.  This invariance result immediately implies \eqref{eq:T.invariant} for the relative likelihood $T_{Y,\theta}$.  Now apply Theorem~\ref{thm:efficiency}.  
\end{proof}


\subsection{Examples}
\label{SS:examples1}

\begin{example}
\label{ex:binomial}
Inference on the success probability $\theta$ based on a sample $Y$ from a binomial distribution is a fundamental problem in statistics.  For this problem, \citet{bcd2001, bcd2002} showed that the widely-used Wald confidence interval often suffers from strikingly poor frequentist coverage properties, and that other intervals can be substantially better in terms of coverage.  In the present context, the relative likelihood is given by 
\[ T_{y,\theta} = \Bigl( \frac{n\theta}{y} \Bigr)^y \Bigl( \frac{n-n\theta}{n-y} \Bigr)^{n-y}. \]
For given $(n,y)$, one can exactly evaluate $\pl_y(\theta) = \prob_\theta(T_{Y,\theta} \leq T_{y,\theta})$, where $Y \sim \bin(n,\theta)$, numerically, using the binomial mass function.  Given $\pl_y(\theta)$, the $100(1-\alpha)$\% plausibility interval for $\theta$ can be found by solving the equation $\pl_y(\theta) = \alpha$ numerically.  Figure~\ref{fig:binomial}(a) shows a plot of the plausibility function for data $(n,y) = (25, 15)$.  As expected, at $\hat\theta=15/25=0.6$, the plausibility function is unity.  The steps in the plausibility function are caused by the discreteness of the underlying binomial distribution.  The figure also shows (in gray) an approximation of the plausibility function obtained by Monte Carlo sampling ($M=1000$) from the binomial distribution, as in \eqref{eq:monte.carlo}, and the exact and approximation plausibility functions are almost indistinguishable.  By the general theory above, this $100(1-\alpha)$\% plausibility interval has guaranteed coverage probability $1-\alpha$.  However, the discreteness of the problem implies that the coverage is conservative.  A plot of the coverage probability, as a function of $\theta$, for $n=50$, is shown in Figure~\ref{fig:binomial}(b), confirming the claimed conservativeness (up to simulation error).  Of the intervals considered in \citet{bcd2001}, only the Clopper--Pearson interval has guaranteed 0.95 coverage probability.  The plausibility interval here is clearly more efficient, particularly for $\theta$ near 0.5.   
\end{example}

\begin{figure}
\begin{center}
\subfigure[Plausibility function]{\scalebox{0.55}{\includegraphics{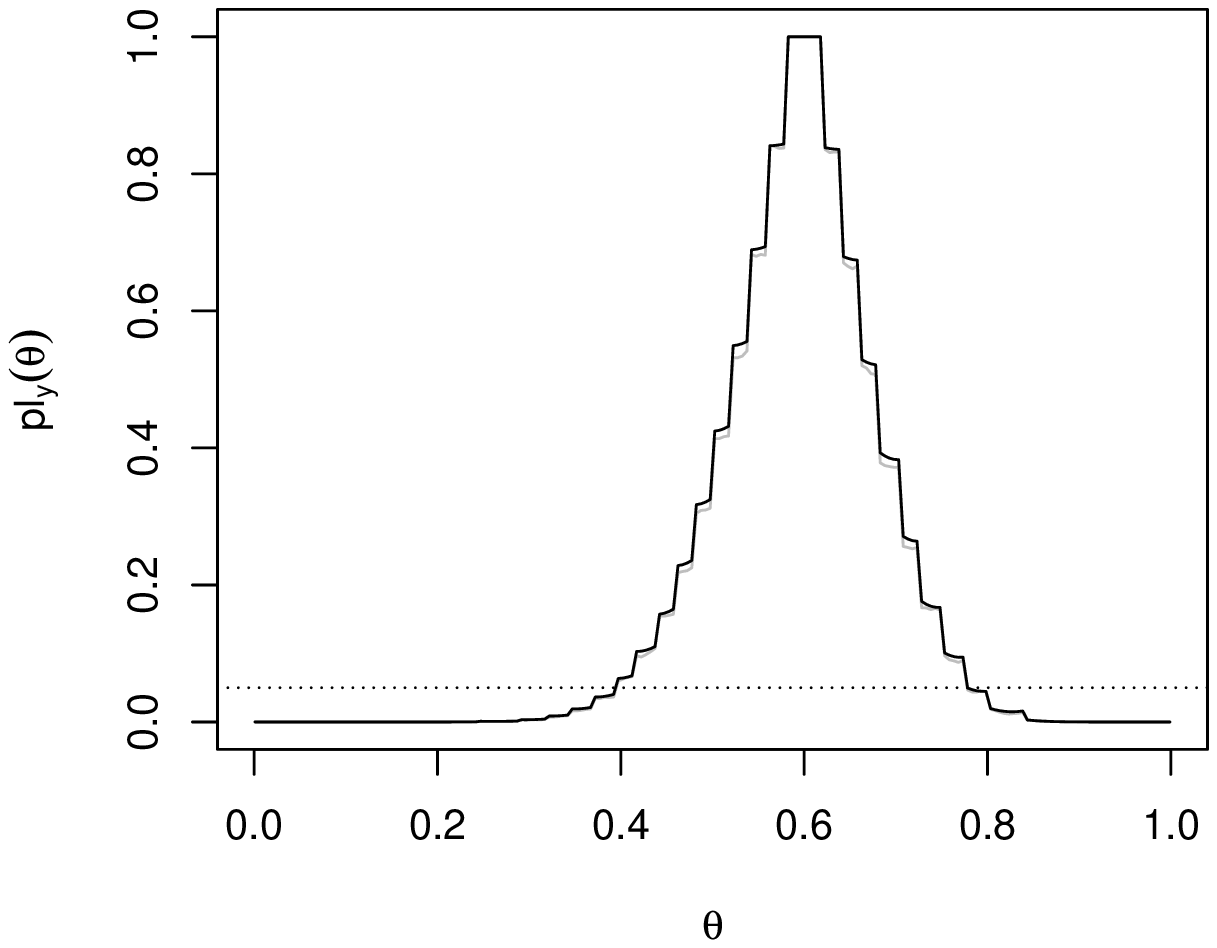}}}
\subfigure[Coverage probability]{\scalebox{0.55}{\includegraphics{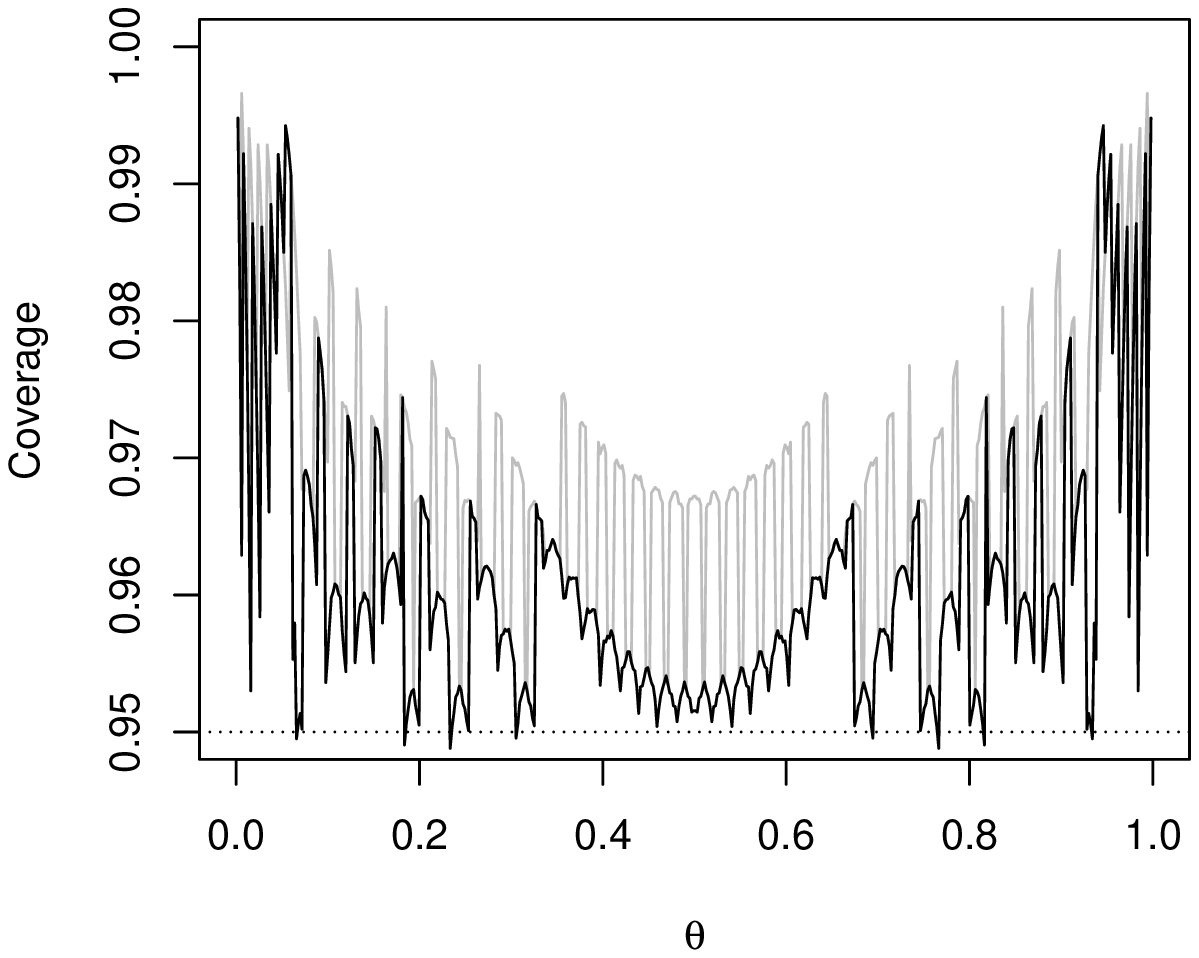}}}
\end{center}
\caption{Panel~(a): Exact (black) and approximate (gray) plausibility functions for a data set with $(n,y)=(25, 15)$. Panel~(b): Coverage probability of the 95\% plausibility (black) and Clopper--Pearson (gray) intervals, as a function of $\theta$, for $n=50$.}
\label{fig:binomial}
\end{figure}

\begin{example}
\label{ex:lindley}
Let $Y_1,\ldots,Y_n$ be independent samples from a distribution with density $p_\theta(y) = \theta^2(\theta + 1)^{-1}(y+1) e^{-\theta y}$, for $y,\theta > 0$.  This non-standard distribution, a mixture of a gamma and an exponential density, appears in \citet{lindley1958}.  In this case, $\hat\theta = \{ 1-\ybar + (\ybar^2 + 6\ybar + 1)^{1/2}\} / 2\ybar$, and the relative likelihood is 
\[ T_{y,\theta} = (\theta / \hat\theta)^{2n} \{ (\hat\theta + 1) / (\theta + 1) \}^n e^{n\ybar(\hat\theta - \theta)}. \]
For illustration, I compare the coverage probability of the 95\% plausibility interval versus those based on standard asymptotic normality of $\hat\theta$ and a corresponding parametric bootstrap.  With 1000 random samples of size $n=50$ from the distribution above, with $\theta=1$, the estimated coverage probabilities are 0.949, 0.911, and 0.942 for plausibility, asymptotic normality, and bootstrap, respectively.  The plausibility interval hits the desired coverage probability on the nose, while the other two, especially the asymptotic normality interval, fall a bit short.  
\end{example}

\begin{example}
\label{ex:gamma}
Consider an iid sample $Y_1,\ldots,Y_n$ from a gamma distribution with unknown shape $\theta_1$ and scale $\theta_2$.  Maximum likelihood estimation of $(\theta_1,\theta_2)$ in the gamma problem has an extensive body of literature, e.g., \citet{greenwood.durand.1960}, \citet{harter.moore.1965}, and \citet{bowman.shenton.1988}.  In this case, the maximum likelihood estimate has no closed-form expression, but the relative likelihood can be readily evaluated numerically and the plausibility function can be found via \eqref{eq:monte.carlo}.  For illustration, consider the data presented in \citet{fraser.reid.wong.1997} on the survival times of $n=20$ rats exposed to a certain amount of radiation.  A plot of the 90\% plausibility region for $\theta=(\theta_1,\theta_2)$ is shown in Figure~\ref{fig:gamma.plot}.  A Bayesian posterior sample is also shown, based on Jeffreys prior, along with a plot of the 90\% confidence ellipse based on asymptotic normality of the maximum likelihood estimate.  Since $n$ is relatively small, the shape the Bayes posterior is non-elliptical.  The plausibility region captures the non-elliptical shape, and has roughly the same center and size as the maximum likelihood region.  Moreover, the plausibility region has exact coverage.  
\end{example}

\begin{figure}
\begin{center}
\scalebox{0.55}{\includegraphics{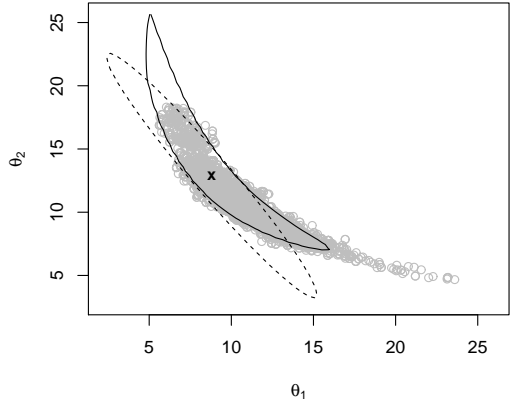}}
\end{center}
\caption{Solid line gives the 90\% plausibility region for $\theta=(\theta_1,\theta_2)$ in the gamma example; {\sf X} marks the maximum likelihood estimate; dashed line gives the 90\% confidence region for $\theta$ based on asymptotic normality of the maximum likelihood estimator; the gray points are samples from the (Jeffreys' prior) Bayesian posterior distribution. }
\label{fig:gamma.plot}
\end{figure}

\begin{example}
\label{ex:probit1}
Consider a binary response variable $Y$ that depends on a set of covariates $x=(1,x_1,\ldots,x_p)^\top \in \RR^{p+1}$.  An important special case is the probit regression model, with likelihood $L_y(\theta) = \prod_{i=1}^n \Phi(x_i^\top\theta)^{y_i}\{1-\Phi(x_i^\top\theta)\}^{1-y_i}$, where $y_1,\ldots,y_n$ are the observed binary response variables, $x_i = (1,x_{i1},\ldots,x_{ip})^\top$ is a vector of covariates associated with $y_i$, $\Phi$ is the standard Gaussian distribution function, and $\theta=(\theta_0,\theta_1,\ldots,\theta_p)^\top \in \Theta$ is an unknown coefficient vector.  This likelihood function can be maximized to obtain the maximum likelihood estimate $\hat\theta$ and, hence, the relative likelihood $T_{y,\theta}$ in \eqref{eq:relative.likelihood}.  Then the plausibility function $\pl_y(\theta)$ can be evaluated as in \eqref{eq:monte.carlo}.  

For illustration, I consider a real data set with a single covariate ($p=1$).  The data, presented in Table~8.4 in \citet[][p.~252]{ghosh-etal-book}, concerning the relationship between exposure to choleric acid and the death of mice.  In particular, the covariate $x$ is the acid dosage and $y=1$ if the exposed mice dies and $y=0$ otherwise.  Here a total of $n=120$ mice are exposed, ten at each of the twelve dosage levels.  Figure~\ref{fig:probit.plot} shows the 90\% plausibility region for $\theta=(\theta_0,\theta_1)^\top$.  
For comparison, the 90\% confidence region based on the asymptotic normality of $\hat\theta$ is also given.  In this case, the plausibility and confidence regions are almost indistinguishable, likely because $n$ is relatively large.  The 0.9 coverage probability of the plausibility region is, however, guaranteed and its similarity to the classical region suggests that it is also efficient.  
\end{example}

\begin{figure}
\begin{center}
\scalebox{0.55}{\includegraphics{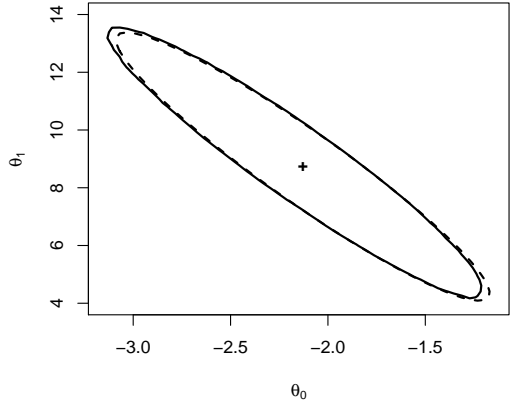}}
\end{center}
\caption{Solid line gives the 90\% plausibility region for $\theta=(\theta_0,\theta_1)$ in the binary regression example; dashed line gives the 90\% confidence region for $\theta$ based on asymptotic normality of the maximum likelihood estimator.  }
\label{fig:probit.plot}
\end{figure}



\section{Marginal plausibility functions}
\label{S:nuisance}

\subsection{Construction}

In many cases, $\theta$ can be partitioned as $\theta = (\psi,\lambda) \in \Psi \times \Lambda$ where $\psi$ is the parameter of interest and $\lambda$ is a nuisance parameter.  For example, $\psi$ could be just a component of the parameter vector $\theta$ or, more generally, $\psi$ is some function of $\theta$.  In such a case, the approach described above can be applied with special kinds of sets, e.g., $A = \{\theta=(\psi,\lambda): \lambda \in \Lambda\}$, to obtain marginal inference for $\psi$.  However, it may be easier to interpret a redefined \emph{marginal} plausibility function.  The natural extension to the methodology presented in Section~\ref{S:plausibility} is to consider some loss function $\ell(y,\psi)$ that does not directly consider the nuisance parameter $\lambda$, and construct the function $T_{y,\psi}$, depending only on the interest parameter $\psi$, just as before.  For example, taking $\ell(y,\psi)=-\sup_\lambda \log L(\psi, \lambda)$ to be the negative profile likelihood corresponds to replacing the relative likelihood \eqref{eq:relative.likelihood} with the relative profile likelihood 
\begin{equation}
\label{eq:profile.likelihood}
T_{y,\psi} = L_y(\psi,\hat\lambda_\psi) \, / \, L_y(\hat\psi,\hat\lambda), 
\end{equation}
where $\hat\lambda_\psi$ is the conditional maximum likelihood estimate of $\lambda$ when $\psi$ is fixed, and $(\hat\psi, \hat\lambda)$ is a global maximizer of the likelihood.  As before, other choices of $T_{y,\psi}$ are possible, but \eqref{eq:profile.likelihood} is an obvious choice and shall be my focus in what follows.  

If the distribution of $T_{Y,\psi}$, as a function of $Y \sim \prob_{\psi,\lambda}$, does not depend on $\lambda$, then the development in the previous section carries over without a hitch.  That is, one can define the distribution function $F_\psi$ of $T_{Y,\psi}$ and construct a marginal plausibility function just as before:
\begin{equation}
\label{eq:marginal.plausibility}
\mpl_y(A) = \sup_{\psi \in A} F_\psi( T_{y,\psi}), \quad A \subseteq \Psi, 
\end{equation}
This function can, in turn, be used exactly as in Section~\ref{SS:inference} for inference on the parameter $\psi$ of interest, e.g., a $100(1-\alpha)$\% marginal plausibility region for $\psi$ is 
\begin{equation}
\label{eq:m.plaus.region}
\{\psi: \mpl_y(\psi) > \alpha\}. 
\end{equation}
The distribution function $F_\psi$ can be approximated via Monte Carlo just as in Section~\ref{SS:computation}; see \eqref{eq:marginal.monte.carlo} below.  Unfortunately, checking that $F_\psi$ does not depend on the nuisance parameter $\lambda$ is a difficult charge in general.  This issue is discussed further below.

\subsection{Theoretical considerations}

It is straightforward to verify that the sampling distribution properties (Theorems~\ref{thm:validity}--\ref{thm:singleton}) of the plausibility function carry over exactly in this more general case, provided that $T_{Y,\psi}$ in \eqref{eq:profile.likelihood} has distribution free of $\lambda$, as a function of $Y \sim \prob_{\psi,\lambda}$.  Consequently, the basic properties of the plausibility regions and tests (Corollaries~\ref{co:size}--\ref{co:coverage}) also hold in this case.  It is rare, however, that $T_{Y,\psi}$ can be written in closed-form, so checking if its distribution depends on $\lambda$ can be challenging.  

Following the ideas in Corollary~\ref{co:efficiency}, it is natural to consider models having a special structure.  The particular structure of interest here is that where, for each fixed $\psi$, $\prob_{\psi,\lambda}$ is a transformation model with respect to $\lambda$.  That is, there exists associated groups of transformations, namely, $\G$ and $\Gbar$, such that 
\[ \text{if $Y \sim \prob_{\psi, \lambda}$, then $gY \sim \prob_{\psi, \gbar\lambda}$}, \quad \text{for all $\psi$}. \]
This is called a composite transformation model; \citet{bn1988} gives several examples, and Example~\ref{ex:correlation} below gives another.  For such models, it follows from the argument in the proof of Theorem~\ref{thm:efficiency} that, if the loss $\ell(y,\psi)$ is invariant to the group action, i.e., if $\ell(gy, \psi) = \ell(y,\psi)$ for all $y$ and $g$, then the corresponding $T_{Y,\psi}$ has distribution that does not depend on $\lambda$.  Therefore,  inference based on the marginal plausibility function $\mpl_y$ is exact in these composite transformation models.  See Examples~\ref{ex:simple.gaussian} and \ref{ex:correlation}.

What if the problem is not a composite transformation model?  In some cases, it is possible to show directly that the distribution of $T_{Y,\psi}$ does not depend on $\lambda$ (see Examples~\ref{ex:simple.gaussian}--\ref{ex:correlation} and \ref{ex:random.effects}) but, in general, this seems difficult.  Large-sample theory can, however, provide some guidance.  For example, if $Y=(Y_1,\ldots,Y_n)$ is a vector of iid samples from $\prob_{\psi,\lambda}$, then it can be shown, under certain standard regularity conditions, that $-2\log T_{Y,\psi}$ is asymptotically chi-square, \emph{for all values of $\lambda$} \citep{bickel1998, murphy.vaart.2000}.  Similar conclusions can be reached for the case where $T_{Y,\psi}$ is a conditional likelihood \citep{andersen1971}.  This suggests that, at least for large $n$, $\lambda$ has a relatively weak effect on the sampling distribution of $T_{Y,\psi}$.   This, in turn, suggests the following intuition: since $\lambda$ has only a minimal effect, construct a marginal plausibility function for $\psi$, by fixing $\lambda$ to be at some convenient value $\lambda_0$.  In particular, a Monte Carlo approximation of $F_\psi$ is as follows:
\begin{equation}
\label{eq:marginal.monte.carlo}
\widehat F_\psi(T_{y,\psi}) = \frac1M \sum_{m=1}^M I\{T_{Y^{(m)},\psi} \leq T_{y,\psi}\}, \quad Y^{(1)},\ldots,Y^{(M)} \iid \prob_{\psi,\lambda_0}.
\end{equation}
Numerical justification for this approximation is provided in Example~\ref{ex:gamma.mean}.  

One could also consider different choices of $T_{Y,\psi}$ that might be less sensitive to the choice of $\lambda$.  For example, the Bartlett correction to the likelihood ratio or the signed likelihood root often have faster convergence to a limiting distribution, suggesting less dependence on $\lambda$ \citep{skovgaard2001, bn.hall.1986, bn1986}.  Such quantities have also been used in conjunction with bootstrap/Monte Carlo schemes that avoid use of the approximate limiting distribution; see \citet{diciccio.martin.stern.2001} and \citet{lee.young.2005}.  These adjustments, special cases of the general program here, did not appear to be necessary in the examples considered below.  However, further work is needed along these lines, particularly in the case of high-dimensional $\lambda$.

\subsection{Examples}
\label{SS:examples2}

\begin{example}
\label{ex:simple.gaussian}
For a simple illustrative example, let $Y_1,\ldots,Y_n$ independent with distribution $\nm(\psi, \lambda)$, where $\theta=(\psi,\lambda)$ is completely unknown, but only the mean $\psi$ is of interest.  In this case, the relative profile likelihood is 
\[ T_{Y,\psi} = \bigl\{1 + n(\Ybar-\psi)^2/S^2 \bigr\}^{-n/2}, \]
where $S^2 = \sum_{i=1}^n (Y_i - \Ybar)^2$ is the usual residual sum-of-squares.  Since $T_{Y,\psi}$ is a monotone decreasing function of the squared $t$-statistic, it is easy to see that the marginal plausibility interval \eqref{eq:m.plaus.region} for $\psi$ is exactly the textbook $t$-interval.  Exactness and efficiency of the marginal plausibility interval follow from the well-known results for the $t$-interval.  
\end{example}

\begin{example}
\label{ex:nonparametric}
Suppose that $Y_1,\ldots,Y_n$ are independent real-valued observations from an unknown distribution $\prob$, a nonparametric problem.  Consider the so-called empirical likelihood ratio, given by $n^n \prod_{i=1}^n \prob(\{Y_i\})$, where $\prob$ ranges over all probability measures on $\RR$ \citep{owen1988}.  Here interest is in a functional $\psi=\psi(\prob)$, namely the $100p$th quantile of $\prob$, where $p \in (0,1)$ is fixed.  \citet[][Theorem~5]{wasserman1990b} shows that 
\[ T_{Y,\psi} = \Bigl( \frac{p}{r} \Bigr)^r \Bigl( \frac{1-p}{n-r} \Bigr)^{n-r}, \quad \text{where} \quad r = \begin{cases} \#\{i: Y_i \leq \psi\} & \text{if $\psi < \hat\psi$} \\ np & \text{if $\psi = \hat\psi$} \\ \#\{i: Y_i < \psi\} & \text{if $\psi > \hat\psi$}, \end{cases} \]
and $\hat\psi$ is the $100p$th sample quantile.  The distribution of $T_{Y,\psi}$ depends on $\prob$ only through $\psi$, so the marginal plausibility function is readily obtained via basic Monte Carlo.  In fact, it is now essentially a binomial problem, like in Example~\ref{ex:binomial}.  
\end{example}

\begin{example}
\label{ex:correlation}
Consider a bivariate Gaussian distribution with all five parameters unknown.  That is, the unknown parameter is $\theta = (\psi,\lambda)$, where the correlation coefficient $\psi$ is the parameter of interest, and $\lambda = (\mu_1,\mu_2,\sigma_1,\sigma_2)$ is the nuisance parameter.  From the calculations in \citet{sun.wong.2007}, the relative profile likelihood is 
\[ T_{Y,\psi} = \bigl\{ (1-\psi^2)^{1/2}(1-\hat\psi^2)^{1/2} \, / \, (1-\psi\hat\psi) \bigr\}^n, \]
where $\hat\psi$ is the sample correlation coefficient.  It is clear from the previous display and basic properties of $\hat\psi$ that the distribution of $T_{Y,\psi}$ is free of $\lambda$; this fact could also have been deduced directly from the problem's composite transformation structure.  Therefore, for the Monte Carlo approximation in \eqref{eq:monte.carlo}, data can be simulated from the bivariate Gaussian distribution with any convenient choice of $\lambda$.  

For illustration, I replicate a simulation study in \citet{sun.wong.2007}.  Here 10,000 samples of size $n=10$ from a bivariate Gaussian distribution with $\lambda = (1,2,1,3)$ and various $\psi$ values.  Coverage probabilities of the 95\% plausibility intervals \eqref{eq:m.plaus.region} are displayed in Table~\ref{tab:correlation1}.  For comparison, several other methods are considered: 
\begin{itemize}
\item Fisher's interval, based on approximate normality of $z=\frac12 \log\{(1 + \hat\psi)/(1-\hat\psi)\}$;
\vspace{-2mm}
\item a modification of Fisher's $z$, due to \citet{hotelling1953}, based on approximate normality of 
\[ z_4 = z - \frac{3z+\hat\psi}{4(n-1)} - \frac{23z + 33\hat\psi - 5\hat\psi^2}{96(n-1)^2}; \]
\item third-order approximate normality of $R^* = R - R^{-1} \log(RQ^{-1})$, where $R$ is a signed log-likelihood root and $Q$ is a measure of maximum likelihood departure, with expressions for $R$ and $Q$ worked out in \citet{sun.wong.2007};  
\vspace{-2mm}
\item standard parametric bootstrap percentile confidence intervals based on the sample correlation coefficient, with 5000 bootstrap samples.  
\end{itemize}
In this case, based on the first three digits, $z$, $z_4$, and $R^*$ perform reasonably well, but the parametric bootstrap intervals suffer from under-coverage near $\pm 1$.  The plausibility intervals are quite accurate across the range of $\psi$ values.   
\end{example}

\begin{table}
\begin{center}
\begin{tabular}{cccccc}
\hline
& \multicolumn{5}{c}{Correlation, $\psi$} \\
\cline{2-6}
Method & $-0.9$ & $-0.5$ & $0.0$ & $0.5$ & $0.9$ \\
\hline
$z$ & 0.9527 & 0.9525 & 0.9500 & 0.9517 & 0.9542 \\
$z_4$ & 0.9499 & 0.9509 & 0.9494 & 0.9502 & 0.9516 \\
$R^*$ & 0.9488 & 0.9500 & 0.9517 & 0.9508 & 0.9492 \\
PB & 0.9385 & 0.9425 & 0.9453 & 0.9438 & 0.9411 \\
MPL & 0.9492 & 0.9496 & 0.9502 & 0.9505 & 0.9509 \\
\hline
\end{tabular}
\end{center}
\caption{Estimated coverage probabilities of 95\% intervals for $\psi$ in the Example~\ref{ex:correlation} simulation.  First three rows are taken from Table~1 in \citet{sun.wong.2007}.  Last two rows correspond to the parametric bootstrap and marginal plausibility intervals, respectively.}  
\label{tab:correlation1}
\end{table}

\begin{example}
\label{ex:gamma.mean}
Consider a gamma distribution with mean $\psi$ and shape $\lambda$; that is, the density is $p_\theta(y) = \Gamma(\lambda)^{-1}(\lambda/\psi)^\lambda y^{\lambda-1} e^{-\lambda y/\psi}$, where $\theta = (\psi,\lambda)$.  The goal is to make inference on the mean $\psi$.   Likelihood-based solutions to this problem are presented in \citet{grice.bain.1980}, \citet{fraser.reid.1989}, \citet{fraser.reid.wong.1997}.  In the present context, it is straightforward to evaluate the relative profile likelihood $T_{Y,\psi}$ in \eqref{eq:profile.likelihood}.  However, it is apparently difficult to check if the distribution function $F_\psi$ of $T_{Y,\psi}$ depends on nuisance shape parameter $\lambda$.  So, following the general intuition above, I shall assume that it has a negligible effect and fix $\lambda \equiv 1$ in the Monte Carlo step.  That is, the Monte Carlo samples, $Y^{(1)},\ldots,Y^{(M)}$, in \eqref{eq:marginal.monte.carlo}, are each iid samples of size $n$ taken from a gamma distribution with mean $\psi$ and shape $\lambda_0=1$.  That the results are robust to fixing $\lambda_0=1$ in large samples is quite reasonable, but what about in small samples?  It would be comforting if the distribution of the relative profile likelihood $T_{Y,\psi}$ were not sensitive to the underlying value of the shape parameter $\lambda$.  Monte Carlo estimates of the distribution function of $T_{Y,\psi}$ are shown in Figure~\ref{fig:gamma.mean} for $n=10$, $\psi=1$, and a range of $\lambda$ values.  It is clear that the distribution is not particularly sensitive to the value of $\lambda$, which provides comfort in fixing $\lambda_0=1$.  Similar comparisons hold for $\psi$ different from unity.  For further justification for fixing $\lambda_0=1$, I computed the coverage probability for the 95\% marginal plausibility interval for $\psi$, based on fixed $\lambda_0=1$ in \eqref{eq:marginal.monte.carlo}, over a range of true $(\psi,\lambda)$ values; in all cases, the coverage is within an acceptable range of 0.95.  

Here I reconsider the data on survival times in Example~\ref{ex:gamma} above.  Table~\ref{tab:gamma.mean} shows 95\% intervals for $\psi$ based on four different methods: the classical first-order accurate approximation; the second-order accurate parameter-averaging approximation of \cite{wong1993}; the best of the two third-order accurate approximations in \citet{fraser.reid.wong.1997}; and the marginal plausibility interval.  In this case, the marginal plausibility interval is shorter than both the second- and third-order accurate confidence intervals.  
\end{example}

\begin{figure}
\begin{center}
\scalebox{0.55}{\includegraphics{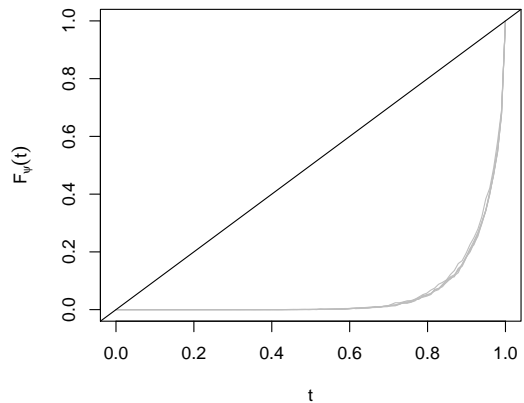}}
\end{center}
\caption{Distribution functions (gray) of the relative profile likelihood $T_{Y,\psi}$ for the gamma mean problem in Example~\ref{ex:gamma.mean}, for mean $\psi=1$ and a range of shapes $\lambda$ from 0.1 to 10.}
\label{fig:gamma.mean}
\end{figure}

\ifthenelse{1=1}{}{
> source("plaus.R")
> psi <- 1
> lambda <- c(0.1, 0.5, 1, 2, 5, 10)
> psi <- c(0.1, 0.5, 1, 2)
> o <- gamma.mean.pl.sim(10, psi, lambda, 1000, 1000); o
      [,1]  [,2]  [,3]  [,4]  [,5]  [,6]
[1,] 0.950 0.947 0.956 0.946 0.950 0.948
[2,] 0.953 0.951 0.950 0.957 0.955 0.950
[3,] 0.951 0.950 0.945 0.931 0.943 0.950
[4,] 0.946 0.954 0.940 0.948 0.945 0.949
> sqrt(0.05 * 0.95 / 1000)
[1] 0.006892024
}

\begin{table}
\begin{center}
\begin{tabular}{cccc}
\hline
& \multicolumn{2}{c}{95\% intervals for $\psi$} & \\
\cline{2-3} 
Method & Lower & Upper & Length \\
\hline
classical & 96.7 & 130.9 & 34.2 \\
\citet{wong1993} & 97.0 & 134.7 & 37.7 \\
\citet{fraser.reid.wong.1997} & 97.2 & 134.2 & 37.0 \\
MPL & 97.1 & 133.6 & 36.5 \\
\hline
\end{tabular}
\end{center}
\caption{Interval estimates for the gamma mean $\psi$ in Example~\ref{ex:gamma.mean}.  First three rows are taken from \citet{fraser.reid.wong.1997}; last row gives the marginal plausibility interval.}  
\label{tab:gamma.mean}
\end{table}

\section{Comparison with parametric bootstrap}
\label{S:bootstrap}

The plausibility function-based method described above allows for the construction of exact frequentist methods in many cases, which is particularly useful in problems where an exact sampling distribution is not available.  An alternative method for such problems is the parametric bootstrap, where the unknown $\prob_\theta$ is replaced by the estimate $\prob_{\hat\theta}$, and the sampling distribution is approximated by simulating from $\prob_{\hat\theta}$.  This approach, and variations thereof, have been carefully studied \citep[e.g.,][]{diciccio.martin.stern.2001, lee.young.2005} and have many desirable properties.  The proposed plausibility function method is, at least superficially, quite similar to the parametric bootstrap, so it is interesting to see how the two methods compare.  In many cases, plausibility functions and parametric bootstrap give similar answers, such as the bivariate normal correlation example above.  Here I show one simple example where the former clearly outperforms the latter.

\begin{example}
\label{ex:random.effects}
Consider a simple Gaussian random effects model, i.e., $Y_1,\ldots,Y_n$ are independently distributed, with $Y_i \sim \nm(\mu_i, \sigma_i^2)$, $i=1,\ldots,n$, where the means $\mu_1,\ldots,\mu_n$ are unknown, but the variances $\sigma_1^2,\ldots,\sigma_n^2$ are known.  The Gaussian random effects portion comes from the assumption that the individual means are an independent $\nm(\lambda,\psi^2)$ sample, where $\theta=(\psi,\lambda)$ is unknown.  Here $\psi \geq 0$ is the parameter of interest, and the overall mean $\lambda$ is a nuisance parameter.  

Using well known properties of Gaussian convolutions, it is possible to recast this hierarchical model in a non-hierarchical form.  That is, $Y_1,\ldots,Y_n$ are independent, with $Y_i \sim \nm(\lambda, \sigma_i^2 + \psi^2)$, $i=1,\ldots,n$.  Here the conditional maximum likelihood estimate of $\lambda$, given $\psi$, is the weighted average $\hat\lambda_\psi = \sum_{i=1}^n w_i(\psi) Y_i / \sum_{i=1}^n w_i(\psi)$, where $w_i(\psi) = 1/(\sigma_i^2 + \psi^2)$, $i=1,\ldots,n$.  From here it is straightforward to write down the relative profile likelihood $T_{Y,\psi}$ in \eqref{eq:profile.likelihood}.  Moreover, since the model is of the composite transformation form, the distribution of $T_{Y,\psi}$ is free of $\lambda$, so any choice of $\lambda$ (e.g., $\lambda=0$) will suffice in the Monte Carlo step \eqref{eq:marginal.monte.carlo}.  One can then readily compute plausibility intervals for $\psi$.  

For interval estimation of $\psi$, a parametric bootstrap is a natural choice.  But it is known that bootstrap tends to have difficulties when the true parameter is at or near the boundary.  The following simulation study will show that the marginal plausibility interval outperforms the parametric bootstrap in the important case of $\psi$ near the boundary, i.e., $\psi \approx 0$.  Since the two-sided bootstrap interval cannot catch a parameter exactly on the boundary, I shall consider true values of $\psi$ getting closer to the boundary as the sample size increases.  In particular, for various $n$, I shall take the true $\psi = n^{-1/2}$ and compare interval estimates based on coverage probability and mean length.  Here data are simulated independently, according to the model $Y_i \sim \nm(0, \sigma_i^2 + \psi^2)$, $i=1,\ldots,n$, where $\psi$ is as above and the $\sigma_1,\ldots,\sigma_n$ are iid samples from an exponential distribution with mean 2.  Table~\ref{tab:random.effects} shows the results of 1000 replications of this process for four sample sizes.  Observe that the plausibility intervals hit the target coverage probability on the nose for each $n$, while the bootstrap suffers from drastic under-coverage for all $n$. 
\end{example}

\begin{table}
\begin{center}
\begin{tabular}{ccccccc}
\hline
& & \multicolumn{2}{c}{MPL} & & \multicolumn{2}{c}{PB} \\
\cline{3-4} \cline{6-7} 
$n$ & & Coverage & Length & & Coverage & Length \\
\hline
50 & & 0.952 & 0.267 & & 0.758 & 0.183 \\
100 & & 0.946 & 0.162 & & 0.767 & 0.138 \\
250 & & 0.948 & 0.079 & & 0.795 & 0.079 \\
500 & & 0.950 & 0.041 & & 0.874 & 0.039 \\
\hline
\end{tabular}
\end{center}
\caption{Estimated coverage probabilities and expected lengths of the 95\% interval estimates for $\psi$ in Example~\ref{ex:random.effects}}  
\label{tab:random.effects}
\end{table}

\ifthenelse{1=1}{}{
\subsubsection{Variable selection in Gaussian linear regression}
\label{SSS:regression}

This final example demonstrates the proposed method's ability to handle non-standard inference problems, in this case, variable selection in regression.  Here I give a relatively simple and straightforward plausibility function-based approach.

Consider the Gaussian linear model $Y = \beta_0 1_n + X\beta + \sigma \eps$, where $Y=(Y_1,\ldots,Y_n)^\top$ is a $n$-vector of response variables, $X$ is a (fixed) $n \times p$ matrix of predictor variables, $\beta_0 1_n$ is a constant $n$-vector intercept, $\beta = (\beta_1,\ldots,\beta_p)^\top$ is a $p$-vector of unknown regression coefficients, $\sigma > 0$ is an unknown scale parameter, and $\eps=(\eps_1,\ldots,\eps_n)^\top$ is a $n$-vector of standard Gaussian noise.  The goal is to select a subset of variables (columns of $X$) that suitably describe the variation in the response $Y$.  Towards this, let $\gamma \in \{0,1\}^p$ index the collection of submodels, with $\beta_\gamma$ the corresponding subvector of $\beta$.  For any given $\gamma$, evaluation of the marginal plausibility function for $\beta_\gamma$ is immediate, since the relative profile likelihood for $\beta_\gamma$ has distribution free of all other parameters; in fact, it is distributed as a simple transformation of a $F$-distribution, so no Monte Carlo methods are required.  Now consider the assertion $A_\gamma = \{\beta_{1-\gamma} = 0\}$, i.e., that model $\gamma$ is not wrong.  Evidence in the observed $y$ for model $\gamma$ can, therefore, be measured by 
\[ \mpl_y(A_\gamma) = F_{[\beta_{1-\gamma} = 0]}\bigl( T_{y, [\beta_{1-\gamma}=0]} \bigr), \]
with the subscript ``$[\beta_{1-\gamma}=0]$'' emphasizing the fact that the calculation is based on the marginal plausibility function for $\beta_{1-\gamma}$, evaluated at zero.  As $A_\gamma$ corresponds to a singleton assertion with respect to $\beta_{1-\gamma}$, the frequentist calibration in Theorem~\ref{thm:singleton} holds.  For inference on the number of non-zero coefficients in $\beta$, one can consider 
\begin{equation}
\label{eq:plaus.reg}
\mpl_y(k) = \max_{\gamma: |\gamma| \leq k} \mpl_y(A_\gamma), 
\end{equation}
the plausibility that the model has at most $k$ non-zero coefficients.  It follows from the calibration property quoted above that a rule that rejects the claim of at most $k$ non-zero coefficients when $\mpl_y(k) \leq \alpha$ with control the frequentist Type~I error rate at $\alpha$.  Therefore, a reasonable model selection strategy is to choose $k^\star$ variables, where $k^\star$ is the smallest $k$ such that $\mpl_y(k) \leq \alpha$.  

For a numerical illustration, consider the diabetes data set analyzed in \citet{efron.lars.2004}.  These data consist of observations for $n=442$ diabetes patients with $p=10$ covariates: {\tt age}, {\tt sex}, body mass index ({\tt bmi}), average blood pressure ({\tt map}), and six blood serum measurements ({\tt tc}, {\tt ldl}, {\tt hdl}, {\tt tch}, {\tt ltg}, and {\tt glu}).   In this case, there are $2^{10} = 1024$ possible models; that $\mpl_y(A_\gamma)$ can be evaluated in closed-form, without Monte Carlo, makes the computations very fast, even for much larger $p$.  A plot of $\mpl_y(k)$ versus $k$ for these data is shown in Figure~\ref{fig:diabetes}.  Here, with $\alpha=0.05$, the method selects $k^\star=5$ variables.  Of all models $\gamma$ with five variables, the one with largest $\mpl_y(A_\gamma)$ consists of {\tt sex}, {\tt bmi}, {\tt map}, {\tt hdl}, and {\tt ltg}.  For comparison, these five variables are a subset of the seven selected by lasso \citep{efron.lars.2004} and a superset of the four  selected by the Bayesian lasso \citep{park.casella.2008}.  

\begin{figure}
\begin{center}
\scalebox{0.75}{\includegraphics{diabetes_reg}}
\end{center}
\caption{Plot of the marginal plausibility function $\mpl_y(k)$ in \eqref{eq:plaus.reg} versus $k$ for the diabetes data regression problem in Section~\ref{SSS:regression}.}
\label{fig:diabetes}
\end{figure}
}

\section{Remarks}
\label{S:remarks}

\begin{remark}
\label{re:other.choices}
It was pointed out in Section~\ref{SS:construction} that the relative likelihood \eqref{eq:relative.likelihood} is not the only possible choice for $T_{y,\theta}$.  For example, if the likelihood is unbounded, then one might consider $T_{y,\theta} = L_y(\theta)$.  A penalized version of the likelihood might also be appropriate in some cases, i.e., $T_{y,\theta} = L_y(\theta) \pi(\theta,y)$, where $\pi$ is something like a Bayesian prior (although could depend on $y$ too).  This could be potentially useful in high-dimensional problems.  Another interesting class of $T_{y,\theta}$ quantities are those motivated by higher-order asymptotics, as in \citet{reid2003} and the references therein.  Choosing $T_{y,\theta}$ to be Barndorff-Nielsen's $r^\star = r^\star(\theta,y)$ quantity, or some variation thereof, could potentially give better results, particularly in the marginal inference problem involving $\theta = (\psi,\lambda)$.  However, the possible gain in efficiency comes at the cost of additional analytical computations, and, based on my empirical results, it is unclear if these refinements would lead to any noticeable improvements.  Also, recently, composite likelihoods \citep[e.g.,][]{varin.reid.firth.2011} have been considered in problems where a genuine likelihood is either not available or is too complicated to compute.  The method proposed herein seems like a promising alternative to the bootstrap methods used there, but further investigation is needed. 
\end{remark}

\begin{remark}
\label{re:ds.theory}
There has been considerable efforts to construct a framework of prior-free probabilistic inference; these include fiducial inference \citep{fisher1973}, generalized fiducial inference \citep{hannig2009, hannig2012}, and the Dempster--Shafer theory of belief functions \citep{dempster2008, shafer1976}.  Although $\pl_y$ is not a probability measure, it can be given a prior-free posterior probabilistic interpretation via random sets.  Moreover, the frequentist results presented herein imply that $\pl_y(A)$, as a measure of evidence in support of the claim ``$\theta \in A$,'' is properly calibrated and, therefore, also meaningful across users and/or experiments.  See \citet{imbasics} and \citet{randset} for more along these lines.  In fact, the method presented herein is a sort of generalized version of the inferential model framework developed in \citet{imbasics, imcond, immarg}; details of this generalization shall be fleshed out elsewhere \citep[e.g.][]{imbook}.  
\end{remark}

\begin{remark}
\label{re:monte.carlo}
Using Monte Carlo approximations \eqref{eq:monte.carlo} and \eqref{eq:marginal.monte.carlo} to construct exact frequentist inferential procedures is, to my knowledge, new.  Despite its novelty, the method is surprisingly simple and general.  On the other hand, there is a computational price to pay for this simplicity and generality.  Specifically, determination of plausibility intervals requires evaluation of the Monte Carlo estimate of $F_\theta(T_{y,\theta})$ for several $\theta$ values.  This can be potentially time-consuming, but running the Monte Carlo simulations for different $\theta$ in parallel can help reduce this cost.  The proposed method works---in theory and in principle---in high-dimensional problems, but there the computational cost is further exaggerated.  An important question is if some special techniques can be developed for problems where only the nuisance parameter is high- or infinite-dimensional.  Clever marginalization can reduce the dimension to something manageable within the proposed framework, making exact inference in semiparametric problems possible.  
\end{remark}

\section*{Acknowledgements}

I am grateful for valuable comments from Chuanhai Liu, the Editor, and the anonymous Associate Editor and referees.  This research is partially supported by the National Science Foundation, grant DMS--1208833.

\ifthenelse{1=1}{}{

\appendix

\section{Plausibility function asymptotics}
\label{S:asymptotics}

The results here cover only the case where $T_{Y,\theta}$ is the relative likelihood in \eqref{eq:relative.likelihood}.  However, I expect that similar results would hold for other choices of $T_{Y,\theta}$, although special techniques (maximal inequalities, etc) would likely be needed. 

The goal here is to study the properties of the plausibility function as $n \to \infty$.  Write $\pl_n(\cdot)$ instead of $\pl_Y(\cdot)$ to emphasize the dependence on $n$ through $Y = (Y_1,\ldots,Y_n)$.  Similarly, $L_n$ denotes the likelihood $L_Y$ and $F_{n,\theta}$ the distribution function $F_\theta$ in \eqref{eq:cdf}.  The goal here is to establish consistency of the plausibility function, similar to the consistency of Bayesian posterior distributions \citep[][Sec.~1.3]{ghoshramamoorthi}.  Here I shall say that the plausibility function is \emph{consistent at $\theta^\star$} if $\pl_n(A) \to 0$ with $\prob_{\theta^\star}$-probability~1 for any subset $A$ such that $\theta^\star$ resides outside the closure of $A$.  Such a property implies that the chance of making an incorrect decision with the plausibility function-based procedures in Section~\ref{SS:inference} is asymptotically small.

Consider an assertion $A$ whose closure does not contain $\theta^\star$.  It is shown in \eqref{eq:plausibility.bound1} that, for any $\eta > 0$,  
\begin{equation}
\label{eq:plausibility.bound0}
\prob_{\theta^\star}\{\pl_n(A) > \eta\} \leq \prob_{\theta^\star}\Bigl\{ \sup_{\theta \in A} \frac{L_n(\theta)}{L_n(\theta^\star)} > \inf_{\theta \in A} F_{n,\theta}^{-1}(\eta) \Bigr\}.
\end{equation}
From \eqref{eq:plausibility.bound0} it is clear that proving consistency requires two things: controlling tail probabilities of $\sup_{\theta \in A} T_{Y,\theta}$ and keeping $F_{n,\theta}^{-1}(\eta)$ sufficiently far away from zero for fixed $\eta$.  The first hurdle can be overcome using empirical process theory \citep[e.g.,][]{wongshen1995}.  The second hurdle can be handled with some additional assumptions.

Suppose $(Y_1,\ldots,Y_n) \in \YY^n$ are iid $\prob_\theta$, and that $\prob_\theta$ admits a density $p_\theta$ with respect to a common $\sigma$-finite measure $\nu$ on $\YY$; write $\mathscr{P} = \{p_\theta: \theta \in \Theta\}$.  For any two non-negative $\nu$-integrable functions $(f,g)$ on $\YY$, let $H(f,g) = \int (f^{1/2}-g^{1/2})^2 \,d\nu$ denote the Hellinger distance between $f$ and $g$.  For $p_\theta,p_{\theta'} \in \mathscr{P}$, write $h(\theta,\theta') = H(p_\theta, p_{\theta'})$; if $\theta \mapsto p_\theta$ is one-to-one, then $h(\theta,\theta')$ defines a metric on $\Theta$.  Assume that, at least in a small neighborhood of the true $\theta^\star$, there is a constant $C$ such that $h(\theta^\star,\theta) \leq C \|\theta-\theta^\star\|$, where $\|\cdot\|$ is the natural metric on $\Theta$.  A Hellinger $\eps$-bracket is a set $\{p \in \mathscr{P}: \ell \leq p \leq u\}$, where $(\ell,u)$ are two non-negative $\nu$-integrable functions with $H(\ell,u) < \eps$.  For a subset $\mathscr{P}_0 \subseteq \mathscr{P}$, define $\nbrack(\eps, \mathscr{P}_0, H)$ to be the smallest number of Hellinger $\eps$-brackets needed to cover $\mathscr{P}_0$.  Finally, let $\lesssim$ and $\gtrsim$ denote inequality up to a universal constant.   

\begin{theorem}
\label{thm:consistency}
Let $\theta^\star$ denote the true parameter value, and take any $\eta \in (0,1)$.  In addition to the assumptions in the preceding paragraph, suppose there exists positive constants $a > 0$, $b = b(\eta) > 0$ such that, for any $\eps > 0$ and for all large $n$, 
\begin{equation}
\label{eq:bracket}
\int_{\eps^2/2^8}^{\sqrt{2}\eps} \bigl\{\log\nbrack(x/a, \mathscr{P}_0, H) \bigr\}^{1/2} \,dx \lesssim \sqrt{n} \eps^2, 
\end{equation}
where $\mathscr{P}_0 = \mathscr{P} \cap \{p_\theta: H(p_{\theta^\star},p_\theta) \leq 2\eps\}$, and
\begin{equation}
\label{eq:quantile.bound}
\inf_{\theta: h(\theta^\star, \theta) > \eps} F_{n,\theta}^{-1}(\eta) \geq e^{-b n \eps^2}.   
\end{equation}
Then the plausibility function $\pl_n(\cdot)$ is consistent at $\theta^\star$.    
\end{theorem}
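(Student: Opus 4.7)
The plan is to leverage the preliminary bound \eqref{eq:plausibility.bound0}, which decouples the problem into two tasks: controlling $\sup_{\theta \in A} L_n(\theta)/L_n(\theta^\star)$ from above, and bounding $\inf_{\theta \in A} F_{n,\theta}^{-1}(\eta)$ from below. Since the closure of $A$ does not contain $\theta^\star$, standard identifiability considerations (using one-to-oneness of $\theta \mapsto p_\theta$ together with continuity in Hellinger distance) yield an $\eps = \eps(A) > 0$ such that $h(\theta^\star,\theta) \geq \eps$ for every $\theta \in A$. Assumption \eqref{eq:quantile.bound} then immediately delivers the uniform lower bound $\inf_{\theta \in A} F_{n,\theta}^{-1}(\eta) \geq e^{-b n \eps^2}$, so it suffices to control
\[
\prob_{\theta^\star}\Bigl\{ \sup_{\theta \in A} \frac{L_n(\theta)}{L_n(\theta^\star)} > e^{-b n \eps^2} \Bigr\}.
\]

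For this, I would invoke the bracketing-entropy machinery of \citet{wongshen1995}. Their main theorem shows that, under a bracketing integral condition of precisely the form \eqref{eq:bracket}, there exist positive constants $c_1, c_2$, depending only on the constant $a$ and on $\theta^\star$, such that, for all sufficiently large $n$ and all $\eps$ exceeding their bracketing rate,
\[
\prob_{\theta^\star}\Bigl\{ \sup_{\theta:\, h(\theta^\star,\theta) \geq \eps} \frac{L_n(\theta)}{L_n(\theta^\star)} \geq e^{-c_1 n \eps^2} \Bigr\} \leq 4 e^{-c_2 n \eps^2}.
\]
Since the Wong--Shen exponent $c_1$ is fixed by the model while the quantile exponent $b$ in \eqref{eq:quantile.bound} may be chosen arbitrarily small (the assumption is monotone in $b$), I would arrange $b \leq c_1$. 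Substituting into \eqref{eq:plausibility.bound0} then gives $\prob_{\theta^\star}\{\pl_n(A) > \eta\} \leq 4 e^{-c_2 n \eps^2}$ for all large $n$, which is summable in $n$; a routine Borel--Cantelli argument concludes that $\pl_n(A) \to 0$ with $\prob_{\theta^\star}$-probability one, which is the desired consistency.

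The main obstacle I anticipate is the interplay between the two exponents $b$ and $c_1$: the argument closes only when the Wong--Shen tail bound is applied at a threshold $e^{-c_1 n \eps^2}$ that strictly dominates the quantile lower bound $e^{-b n \eps^2}$. In the abstract theorem this is arranged merely by taking $b$ small, but in any concrete model the nontrivial work lies in verifying \eqref{eq:quantile.bound} for such a $b$, since it requires uniform control on the small quantiles of the relative-likelihood distribution $F_{n,\theta}$ for $\theta$ away from $\theta^\star$---precisely the quantities the author flags as difficult to analyze in Section~\ref{SS:theory}. A secondary technical point is passing from ``closure of $A$ does not contain $\theta^\star$'' to the uniform Hellinger separation $h(\theta^\star,\theta) \geq \eps$; this is where one must use the Lipschitz relation $h \leq C\|\cdot\|$ together with separability of $\Theta$ (or else restrict attention to assertions $A$ that are already Hellinger-separated from $\theta^\star$).
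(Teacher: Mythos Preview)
Your overall strategy mirrors the paper's proof exactly: invoke the preliminary bound \eqref{eq:plausibility.bound0}, embed $A$ into a Hellinger-separated set $\{\theta: h(\theta^\star,\theta) > \eps\}$, apply \eqref{eq:quantile.bound} for the threshold $e^{-bn\eps^2}$, cite Theorem~1 of \citet{wongshen1995} for the likelihood-ratio tail, and finish with Borel--Cantelli. The paper's own argument proceeds through precisely these steps.

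There is, however, a slip in your exponent-matching argument. You claim that ``the quantile exponent $b$ \dots may be chosen arbitrarily small (the assumption is monotone in $b$),'' but the monotonicity runs the other way: for fixed $n$ and $\eps$, shrinking $b$ makes $e^{-bn\eps^2}$ \emph{larger}, so the lower bound $\inf_\theta F_{n,\theta}^{-1}(\eta) \geq e^{-bn\eps^2}$ becomes \emph{stronger}, not weaker. If the hypothesis supplies some $b_0$, you may freely enlarge $b$ but cannot shrink it below $b_0$; hence ``arranging $b \leq c_1$'' is not generally available. The paper, for its part, simply asserts that Wong--Shen bounds the relevant probability by $4e^{-cn\eps^2}$ and does not discuss the relation between $b$ and the Wong--Shen constant at all; so your instinct that this is the delicate point is well founded, but the resolution you propose does not work.

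A smaller point on your secondary remark: you propose to pass from $\|\cdot\|$-separation to Hellinger separation via the Lipschitz relation $h \leq C\|\cdot\|$. That inequality also points the wrong way---it says small $\|\cdot\|$ forces small $h$, not that large $\|\cdot\|$ forces large $h$. One would need a reverse bound $h \geq c\|\cdot\|$, or a compactness/continuity argument on the closure of $A$, to conclude $\inf_{\theta \in A} h(\theta^\star,\theta) > 0$. The paper's proof uses the same phrasing here, so this is a shared loose end rather than a divergence in approach.
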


\begin{proof}
\ifthenelse{1=0}{Omitted to save space.}{
First, take any $\eta \in (0,1)$ and any $A \subset \Theta$.  Then 
\begin{align}
\prob_{\theta^\star}\{\pl_n(A) > \eta\} & = \prob_{\theta^\star} \Bigl\{ \sup_{\theta \in A} F_\theta(T_{Y,\theta}) > \eta \Bigr\} \notag \\
& \leq \prob_{\theta^\star}\Bigl\{ \sup_{\theta \in A} F_\theta\Bigl(\sup_{\theta \in A} T_{Y,\theta} \Bigr) > \eta \Bigr\} \notag \\
& = \prob_{\theta^\star} \Bigl\{ \sup_{\theta \in A} T_{Y,\theta} > \inf_{\theta \in A} F_\theta^{-1}(\eta) \Bigr\} \notag \\
& \leq \prob_{\theta^\star}\Bigl\{ \sup_{\theta \in A} \frac{L_n(\theta)}{L_n(\theta^\star)} > \inf_{\theta \in A} F_{n,\theta}^{-1}(\eta) \Bigr\}, \label{eq:plausibility.bound1}
\end{align}
where the last inequality follows because $L_n(\theta^\star) \leq L_n(\hat\theta)$.  This proves \eqref{eq:plausibility.bound0}.  Outer probabilities may be needed on the right-hand side if measurability is not guaranteed.  

Now take any assertion $A \not\ni \theta^\star$.  Then there exists an open $\|\cdot\|$-ball $B$ centered at $\theta^\star$ such that $A \subset B^c$.  Since the Hellinger distance $h$ is bounded, up to a constant, by $\|\cdot\|$, there exists $\eps > 0$ such that $A_\eps = \{\theta: h(\theta^\star,\theta) > \eps\} \supset A$.  From \eqref{eq:plausibility.bound1} and \eqref{eq:quantile.bound}, 
\begin{align}
\prob_{\theta^\star}\{ \pl_n(A) > \eta \} & \leq \prob_{\theta^\star} \Bigl\{ \sup_{\theta \in A} \frac{L_n(\theta)}{L_n(\theta^\star)} > \inf_{\theta \in A} F_{n,\theta}^{-1}(\eta) \Bigr\} \notag \\
& \leq \prob_{\theta^\star}\Bigl\{ \sup_{\theta \in A_\eps} \frac{L_n(\theta)}{L_n(\theta^\star)} > \inf_{\theta \in A_\eps} F_{n,\theta}^{-1}(\eta) \Bigr\} \notag \\
& \leq \prob_{\theta^\star}\Bigl\{ \sup_{\theta \in A_\eps} \frac{L_n(\theta)}{L_n(\theta^\star)} > e^{-bn\eps^2} \Bigr\}.  \label{eq:plausibility.bound2}
\end{align}
By assumption \eqref{eq:bracket} and Theorem~1 of \citet{wongshen1995}, there is a constant $c > 0$ such that the upper bound itself is bounded by $4e^{-cn\eps^2}$.  It follows that, for any $\eta \in (0,1)$, the upper bound \eqref{eq:plausibility.bound2} for $\prob_{\theta^\star}\{\pl_n(A) > \eta\}$ is summable over $n \geq 1$.  An application of the Borel--Cantelli lemma gives that $\pl_n(A) \to 0$ with $\prob_{\theta^\star}$-probability~1.  Since $A \not\ni \theta^\star$ is arbitrary, the plausibility function is consistent.
}  
\end{proof}

Theorem~\ref{thm:consistency} implies that, as $n \to \infty$, the Type~II error probability of the test vanishes and, similarly, the plausibility region is shrinking to the singleton $\{\theta^\star\}$.  In fact, Theorem~\ref{thm:consistency} actually gives more than just consistency---there is some notion of a rate at which $\pl_n(\cdot)$ collapses to a point mass at $\theta^\star$.  For example, if $(\eps_n)$ is a vanishing sequence such that the conditions \eqref{eq:bracket} and \eqref{eq:quantile.bound} hold with $\eps_n$ in place of $\eps$, then the same proof shows that $\pl_n(\{\theta: \|\theta-\theta^\star\| \gtrsim \eps_n\}) \to 0$ in $\prob_{\theta^\star}$-probability, provided that $n \eps_n^2 \to \infty$.  The latter condition can be relaxed to $n \eps_n^2 = O(1)$ with some extra effort.  Compare this to the Bayesian results in, e.g., \citet{ggv2000}.  Therefore, the plausibility regions shrink with $n$ at a rate roughly $n^{-1/2}$.  

Conditions \eqref{eq:bracket} and \eqref{eq:quantile.bound} are rather abstract, so some comments are in order.  First, for parametric families $\mathscr{P}$, the Hellinger bracketing metric entropy $\log\nbrack(x/a, \mathscr{P} \cap \{p_\theta: H(p_{\theta^\star},p_\theta) \leq 2\eps\}, H)$ is typically of the order $\log(a\eps/x)$.  So condition \eqref{eq:bracket} holds with $\eps$ of the order $n^{-1/2}$.  See Remark~(ii) in \citet[][p.~351]{wongshen1995}.  Second, unlike uniform control of likelihood ratios, which is ubiquitous in statistical theory, bounding the quantile function $F_{n,\theta}^{-1}$ seems unique to the present approach.  In group-transformation models, Theorem~\ref{thm:efficiency} says $F_{n,\theta}$ is free of $\theta$ and positive on $(0,1)$; thus, \eqref{eq:quantile.bound} holds trivially.  Outside group-transformation models, if $-2\log T_{Y,\theta}$ is asymptotically chi-square, then the fact that $F_{n,\theta}$ is converging pointwise to a smooth distribution function supported on $(0,1)$ provides some comfort, though \eqref{eq:quantile.bound} should hold more generally.  My conjecture is that $T_{Y,\theta}$ is stochastically larger than $\unif(0,1)$, uniformly in $\theta$, in which case, \eqref{eq:quantile.bound} holds trivially.  This can be checked numerically for many examples (see Figure~\ref{fig:stochastic.order}), but a general proof of this claim escapes me.

\begin{figure}
\begin{center}
\subfigure[Poisson]{\scalebox{0.60}{\includegraphics{poisson_cdf}}} 
\subfigure[Lindley model (Example~\ref{ex:lindley})]{\scalebox{0.60}{\includegraphics{lindley_cdf}}}
\end{center}
\caption{Plots of distribution functions $F_\theta$ for a range of $\theta$ values for two distributions with $n=25$ (in gray).  Diagonal line is the uniform distribution function.  }
\label{fig:stochastic.order}
\end{figure}

}

\bibliographystyle{apalike}
\bibliography{/Users/rgmartin/Dropbox/Research/mybib}

\end{document}